\theoremstyle{plain} %--default
\newcounter{wow}
\newtheorem{theorem}    [wow]{Theorem}
\newtheorem{lemma}      [wow]{Lemma}
\newtheorem{corollary}  [wow]{Corollary}
\newtheorem{proposition}[wow]{Proposition}
\newtheorem{question}   [wow]{Question}
\theoremstyle{definition}
\newtheorem{definition} [wow]{Definition}
\newtheorem{example}    [wow]{Example}
\theoremstyle{remark}
\newtheorem{remark}[wow]{Remark}
 \newcommand{\sect}{%
   \removelastskip%
   \vskip.5\baselineskip%
   \refstepcounter{subsection}%
   \noindent%
   {\bf (\thesubsection)}%
   \hspace{3pt}}
\def\Sym{\operatorname{Sym}}
\def\SO{\operatorname{SO}}
\def\id{\operatorname{id}}
\def\Mon{\operatorname{Mon}}
\def\Hom{\operatorname{Hom}}
\def\ext{\operatorname{ext}}
\def\Stab{\operatorname{Stab}}
\def\Gr{\operatorname{Gr}}
\def\Ext{\operatorname{Ext}}
\def\divo{\operatorname{div}}
\def\ch{\operatorname{ch}}
\def\Br{\operatorname{Br}}
\def\Pic{\operatorname{Pic}}
\def\Hilb{\operatorname{Hilb}}
\def\NS{\operatorname{NS}}
\def\chern{\operatorname{c}}
\def\ST{\operatorname{ST}}
\def\PGL{\operatorname{PGL}}
\newcommand{\Td}{\operatorname{Td}}
\renewcommand{\P}{\mathbb{P}}
\newcommand{\Q}{\mathbb{Q}}
\newcommand{\Z}{\mathbb{Z}}
\newcommand{\C}{\mathbb{C}}
\newcommand{\R}{\mathbb{R}}
\newcommand{\G}{\mathbb{G}}
\renewcommand{\O}{\mathcal{O}}
\renewcommand{\phi}{\varphi}
\renewcommand{\tilde}[1]{\widetilde{#1}}
\renewcommand{\bar}[1]{\overline{#1}}
\newcommand{\into}{\rightarrow}
\newcommand{\K}[1]{$\mathrm{K}3^{[#1]}$}
\newcommand{\Kthree}{K3\;}
\newcommand{\class}[1]{\mathbf{#1}}
\newcommand{\HalgX}{\tilde{H}_\mathrm{alg}(X,\Z)}
\newcommand{\real}[1]{\ldots}
 \renewcommand{\chern}[1]{\mathrm{c}_{#1}}
 \title[Lagrangian planes in holomorphic symplectic varieties]{A Classification of Lagrangian planes in holomorphic symplectic varieties}
 \date{\today}
 \author{Benjamin Bakker}
 \address{B. Bakker:
 Institut f\"ur Mathematik, Humboldt-Universit\"at zu Berlin.
 }
 \email{benjamin.bakker@math.hu-berlin.de}
 \subjclass{Primary 14J40; Secondary 14E30, 14J28}
\keywords{holomorphic symplectic variety; extremal ray; lagrangian subvariety}
\begin{document}

% \dedicatory{}

 \begin{abstract}
Classically, an indecomposable class $R$ in the cone of effective curves on a \Kthree surface $X$ is representable by a smooth rational curve if and only if $R^2=-2$.  We prove a higher-dimensional generalization conjectured by Hassett and Tschinkel:  for a
holomorphic symplectic variety $M$ deformation equivalent to a Hilbert
scheme of $n$ points on a \Kthree surface, an extremal curve class $R\in H_2(M,\Z)$ in the Mori cone is the line in a Lagrangian $n$-plane $\P^n\subset M$ if and only if certain intersection-theoretic criteria are met.  In particular, any such class satisfies $(R,R)=-\frac{n+3}{2}$ and the primitive such classes are all contained in a single monodromy orbit.
 \end{abstract}

\maketitle

%\tableofcontents
\section*{Statement of Results}
  Let $M$ be an (irreducible) holomorphic symplectic
variety---that is, a smooth simply connected projective variety admitting a unique (up to scalars) everywhere nondegenerate holomorphic two-form.  $M$ comes equipped with
a quadratic form $(\cdot,\cdot)$ on $H^2(M,\Z)$ called the
Beauville--Bogomolov form; it is primitive, integral, nondegenerate, and
deformation-invariant of signature $(3,b_2(M)-3)$. A \Kthree surface $X$, for example, is
holomorphic symplectic, and the Beauville--Bogomolov form in this case is simply the
intersection pairing.  The Hilbert scheme of $n$ points on $X$, $M=X^{[n]}$, is holomorphic symplectic as well, and the Beauville--Bogomolov form yields an orthogonal decompositon
\[H^2(M,\Z)=H^2(X,\Z)\oplus\Z\delta\]where $H^2(X,\Z)$ is isometrically embedded via pullback along
the Hilbert--Chow map $X^{[n]}\into X^{(n)}$, and $(\delta,\delta)=2-2n$, where $2\delta$ is the divisor of
nonreduced subschemes.  More generally, any proper moduli space $M(\class{v})$ of
stable sheaves on $X$ of Mukai vector $\class{v}$ with
$\class{v}^2=2n-2$ is a holomorphic symplectic variety deformation-equivalent to a Hilbert scheme of $n$ points on a
\Kthree surface.  We say in this case $M$ is  ``of \Kthree type," or sometimes ``of \K{n} type" if we want to specify the dimension.

%. BOGO DECOMP

Classically, much of the geometry of a projective \Kthree surface is encoded in
the intersection pairing on the N\'eron-Severi group $\NS(X)$.  If $h$ is an ample divisor, then the closed cone $\overline{NE}_1(X)$ of effective curves (also called the Mori cone), for instance, is the closure of the cone generated by
\[\{ R\in \NS(X)\mid h.R>0\textrm{ and }R^2\geq -2\}\] 
A primitive curve class $R$ of nonpositive self-intersection generating an extremal ray of $\overline{NE}_1(X)$ is dual to a face of the nef cone whose generic divisor induces either:  a contraction of a smooth rational curve $R$ to an ordinary double point when $R^2=-2$; or an elliptic fibration $X\into\P^1$, when $R^2=0$. 

A program to analogously understand the birational geometry of $M$ purely in terms
of the intersection theory of the Beauville--Bogolomov form was initiated by Hassett
and Tschinkel in
\cite{ratlcurves} and proven for fourfolds in \cite{moving}.  Great strides toward fleshing out this program in higher dimensions have been made recently  by using
Bridgeland stability conditions to analyze moduli spaces $M(\class{v})$ and then
deforming to arbitrary \Kthree type varieties.  Indeed, due to
work of Bayer--Macr\`i \cite{BM}, Bayer--Hassett--Tschinkel
\cite{BHT}, and Mongardi \cite{Mongardi} there is now a complete
description of the nef, movable, and Mori cones of $M$ (see Section \ref{nefcone} below).  In particular,
\begin{theorem}[Proposition 2 of \cite{BHT} or Corollary 2.4 of
  \cite{Mongardi}]\label{extreme}  Let $M$ be a holomorphic symplectic variety of
  \Kthree type and dimension $2n$.  If $R\in H_2(M,\Z)$ is the primitive generator of an extremal ray of the Mori cone, then $(R,R)\geq-\frac{n+3}{2}$.
\end{theorem}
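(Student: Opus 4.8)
The plan is to reduce to the case of a moduli space of Bridgeland-stable objects and then read the bound off the lattice-theoretic description of the Mori cone. By the global Torelli theorem and Markman's results on the monodromy of \Kthree-type varieties, together with the deformation-invariance of the Beauville--Bogomolov form, it suffices to prove the inequality after deforming $M$ to a moduli space $M(\class{v})$ of Bridgeland-stable objects on a \Kthree surface $X$ with $\class{v}^2=2n-2$, keeping the extremal ray $R$ of type $(1,1)$ and a polarization generic for $R$; an extremal ray deforms to an extremal ray, so the bound transports back to $M$. On $M(\class{v})$ I would invoke the wall-crossing description recalled above: every extremal ray of $\overline{\NE}_1(M(\class{v}))$ arises from a wall in the space of stability conditions, governed by a rank-two sublattice $\mathcal H\subset\HalgX$ containing $\class{v}$, and the contracted curve class $R$ is represented by the Mukai vector $\class{a}\in\mathcal H$ of a stable factor destabilized along the wall.

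The second step is the self-intersection computation. The Mukai isomorphism identifies $H^2(M,\Z)$ isometrically with $\class{v}^\perp$, hence $H_2(M,\Z)\cong(\class{v}^\perp)^*$, and under the rational identification $H_2(M,\Q)\cong\class{v}^\perp\otimes\Q$ induced by the nondegenerate form the class of $R$ corresponds to the orthogonal projection of $\class{a}$ away from $\class{v}$. A direct expansion then gives
\[(R,R)=\left(\class{a}-\frac{(\class{a},\class{v})}{\class{v}^2}\class{v}\right)^2=\class{a}^2-\frac{(\class{a},\class{v})^2}{\class{v}^2}.\]
Two constraints on $\class{a}$ remain: first, since $\class{a}$ is the Mukai vector of an honest stable object, Mukai's inequality yields $\class{a}^2\geq-2$; second, after possibly replacing the destabilizing subobject by its quotient (which replaces $\class{a}$ by $\class{v}-\class{a}$ and leaves the ray unchanged) we may assume $0<(\class{a},\class{v})\leq\class{v}^2/2=n-1$.

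Finally I would minimize the right-hand side over this region. Writing $s=\class{a}^2$ and $p=(\class{a},\class{v})$, the quantity $s-p^2/(2n-2)$ is increasing in $s$ and decreasing in $p^2$, so on $s\geq-2$, $0<p\leq n-1$ it attains its infimum at $s=-2$, $p=n-1$, where
\[(R,R)=-2-\frac{(n-1)^2}{2n-2}=-2-\frac{n-1}{2}=-\frac{n+3}{2}\]
(for $n\geq2$; the case $n=1$ is the classical \Kthree statement recalled above). Hence $(R,R)\geq-\frac{n+3}{2}$ for the wall class $\class{a}$; and if the primitive generator $R_0$ of the ray satisfies $R=kR_0$ with $k\geq1$ an integer, then $(R_0,R_0)=(R,R)/k^2\geq(R,R)$ since $(R,R)<0$, so the bound descends to the primitive class.

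I expect the main obstacle to be the first step: justifying that \emph{every} extremal ray---not merely the flopping ones coming from a single spherical class---is captured by a wall class $\class{a}$ with $\class{a}^2\geq-2$ and $0<(\class{a},\class{v})\leq\class{v}^2/2$, so that no extremal curve of strictly smaller square is overlooked. This requires the full Bayer--Macr\`i classification of walls (spherical, isotropic, and totally semistable) together with their identification of the contracted loci and curve classes; it is exactly this input that makes the clean optimization above legitimate. A secondary subtlety is the deformation argument itself, which rests on the monodromy-invariance of the Mori cone description and on keeping $R$ algebraic along the chosen Hodge locus.
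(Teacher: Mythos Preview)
The paper does not actually prove this theorem: it is quoted in the introduction as Proposition~2 of \cite{BHT} (equivalently Corollary~2.4 of \cite{Mongardi}), and no proof is supplied in the body. Your approach is in fact the argument behind the cited result, and the computation is correct. The inputs you flag as the ``main obstacle''---that every extremal ray is governed by a class $\class{a}\in\tilde\Lambda_{\mathrm{alg}}$ with $\class{a}^2\geq-2$ and $|(\class{a},\class{v})|\leq\class{v}^2/2$, and that this description persists under deformation---are exactly what the paper packages as Theorem~\ref{nefconedescript} (Theorem~1 of \cite{BHT}). Given that theorem, your optimization is immediate, and the paper carries out the identical projection calculation in Lemma~\ref{numbers} for the extremal case $\class{a}^2=-2$, $(\class{a},\class{v})=\class{v}^2/2$.

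One minor redundancy: since Theorem~\ref{nefconedescript} already applies to an arbitrary polarized \Kthree-type variety---it is the content of \cite{BHT} that the lattice description of the Mori cone deforms away from Bridgeland moduli spaces---you do not need to run the deformation to a moduli space yourself. You may read the bound off directly from that theorem, which makes your first step (and the ``secondary subtlety'' you raise) unnecessary.
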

\noindent Here we have used the embedding $H^2(M,\Z)\subset H_2(M,\Z)$ induced by the Beauville--Bogomolov form, and the resulting extension of $(\cdot,\cdot)$ to a rational form on $H_2(M,\Z)$.  

The next step in the Hassett--Tschinkel program is to classify the
geometry of extremal contractions in terms of the intersection theory of their contracted curves.  The exceptional loci of such contractions generically look like a fibration of $k$-dimensional projective spaces\footnote{If $k=1$, an ADE configuration of rational curves can occur in the generic fiber.} over a $(2n-2k)$-dimensional holomorphic symplectic variety, contracting via the projection (see for example \cite{namikawa,projspaces}).  In particular, Lagrangian planes contract to points.  

Our main result is to provide a numerical classification of curve classes $R\in H_2(M,\Z)$ that sweep out a Lagrangian plane $\P\subset M$, thus proving a conjecture of Hassett and Tschinkel \cite{extremal}.  In particular, we have the

%LAGRANGIAN FIBRATION

%4folds and work of wierzba
%
%$n\leq 4$
%
%Shepherd-Barron

\begin{theorem}\label{main}
Let $M$ be a holomorphic symplectic variety of \Kthree type and dimension $2n$, and suppose $R\in
H_2(M,\Z)$ is the class of a line in a Lagrangian $n$-plane $\P^n\subset M$.  Then $R$ satisfies $(R,R)=-\frac{n+3}{2}$ and $2R\in H^2(M,\Z)$.  
\end{theorem}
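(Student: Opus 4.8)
The plan is to reduce the statement to a single explicit model by a deformation argument and then to compute directly in that model. To set up, note first that since a Lagrangian plane is the exceptional locus of an extremal contraction collapsing it to a point (see \cite{namikawa,projspaces}), the class $R$ of the line spans an extremal ray of the Mori cone, and as the minimal rational curve in its ray it is the primitive generator. It is convenient to transport $R$ to a rational divisor class: let $R^\flat\in H^2(M,\Q)$ be the class determined by $(R^\flat,D)=R\cdot D$ for all $D\in H^2(M,\Z)$, so that $(R,R)=(R^\flat,R^\flat)$, and for the inclusion $i\colon\P^n\hookrightarrow M$ one has $i^*D=(R\cdot D)\,h=(R^\flat,D)\,h$ with $h$ the hyperplane class. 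Theorem \ref{extreme} already gives the bound $(R,R)\ge-\frac{n+3}{2}$, so it suffices to pin down the exact value (and the divisibility) on one representative.

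The reduction rests on the observation that both quantities in the statement --- the rational number $(R,R)$ and the integrality of $2R$ --- are invariant under parallel transport in families of holomorphic symplectic varieties along which $R$ stays of type $(1,1)$, equivalently under the monodromy action on the pair $\bigl(H^2(M,\Z)\subset H_2(M,\Z),\,R\bigr)$, since the Beauville--Bogomolov form and the sublattice $H^2\subset H_2$ are flat. The Lagrangian hypothesis fixes the local geometry: $N_{\P^n/M}\cong\Omega^1_{\P^n}$, so an analytic neighbourhood of $\P^n$ is modelled on the zero section of $T^*\P^n$, exactly as for the standard example $C^{[n]}\subset X^{[n]}$ where $X$ is a \Kthree surface and $C\cong\P^1\subset X$ is a smooth rational curve, so that $C^{[n]}=\Hilb^n(C)\cong\P^n$ is Lagrangian. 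Using the description of the Mori cone by Bridgeland stability (Theorem \ref{extreme} and its sources \cite{BM,BHT,Mongardi}) together with the deformation theory of such contractions, I would deform $(M,R)$ to $(X^{[n]},[\ell])$, with $\ell$ the line in $C^{[n]}$, reducing to a computation in this model.

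In the model I would compute $R^\flat$ in the decomposition $H^2(X^{[n]},\Z)=H^2(X,\Z)\oplus\Z\delta$. For $D\in H^2(X,\Z)$, the image of $\ell$ under the Hilbert--Chow morphism $X^{[n]}\into X^{(n)}$ is a copy of $C$ (one point moves along $C$ while the remaining $n-1$ stay fixed), so $(R^\flat,D)=R\cdot D=C\cdot D$ and the $H^2(X)$-component of $R^\flat$ is $[C]$. For the $\delta$-component, $\P^n=C^{[n]}=|\O_{\P^1}(n)|$ meets the nonreduced locus $E=2\delta$ in the discriminant hypersurface of binary forms of degree $n$, which has degree $2(n-1)$; hence $R\cdot\delta=\tfrac12(E\cdot\ell)=n-1$, and solving $(R^\flat,\delta)=c(\delta,\delta)=c(2-2n)=n-1$ gives $c=-\tfrac12$. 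Thus $R^\flat=[C]-\tfrac12\delta$, so $2R=2[C]-\delta\in H^2(X^{[n]},\Z)$ and
\[
(R,R)=(R^\flat,R^\flat)=C^2+\tfrac14(\delta,\delta)=-2+\tfrac{2-2n}{4}=-\frac{n+3}{2},
\]
as claimed; one checks in passing that $2[C]-\delta$ is primitive of divisibility $2$, the data that lets the monodromy argument single out one orbit.

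I expect the reduction of the second paragraph, rather than the model computation, to be the main obstacle. While the Lagrangian condition forces the formal neighbourhood of $\P^n$ to be that of $T^*\P^n$, transporting the \emph{global} invariant $(R,R)$ to the model requires knowing that $(M,\P^n)$ genuinely deforms to $(X^{[n]},C^{[n]})$ --- equivalently, that every line class of a Lagrangian plane lies in the monodromy orbit of $[C]-\tfrac12\delta$. This is precisely where the classification of extremal rays in \cite{BM,BHT,Mongardi} enters: one must match the wall in the space of stability conditions (or the discriminant data of $R^\flat$ in $H^2$) cut out by contracting a full-dimensional $\P^n$ with the maximally negative value $-\frac{n+3}{2}$ allowed by Theorem \ref{extreme}, and rule out any intermediate contraction type having $\P^n$ as a fibre. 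Establishing this matching is the crux of the argument.
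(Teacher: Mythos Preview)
Your model computation in $X^{[n]}$ is correct, and the overall strategy---deform to a model where one can compute---is the paper's as well. But two things go wrong.

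First, the opening move is not valid. A Lagrangian plane can always be blown down to a point in the category of algebraic spaces, but the target need not be projective, so this is not an extremal contraction in the Mori-theoretic sense and $R$ is not forced to span an extremal ray. The references \cite{namikawa,projspaces} go the other direction: they describe exceptional loci of \emph{given} projective extremal contractions, not the converse. Primitivity also fails in general---even for extremal planes the line class can be imprimitive (Remark~\ref{primrem}). Theorem~\ref{main} assumes neither hypothesis, so you cannot impose them at the outset.

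Second, the gap you flag in your last paragraph is real, and as you phrase it, circular: knowing that every such $R$ lies in the monodromy orbit of $[C]-\tfrac12\delta$ is Corollary~\ref{bettermonodromy}, a \emph{consequence} of the theorem. The paper breaks the circle differently. It does not aim at one fixed model; instead it deforms the pair $(M,\P)$---using only $R^2<0$ and the unobstructedness of deformations of a Lagrangian subvariety---to \emph{some} Bridgeland moduli space $M_\sigma(\class v)$ in which the deformed plane is extremal (Theorem~\ref{better}); extremality is thus \emph{arranged} by deformation, not assumed. The substantive step is then Proposition~\ref{moduli}: a Jordan--H\"older analysis at the wall shows that every extremal Lagrangian plane in $M_\sigma(\class v)$ parametrises extensions of two $\sigma_0$-stable spherical objects $S,T$ with $\class v=\class s+\class t$, so that $R=\theta^\vee(\class s)$ with $\class s^2=-2$ and $(\class s,\class v)=\tfrac{\class v^2}{2}$. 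The numbers then drop out of the Mukai-lattice identity $(R,R)=\bigl(\class s-\tfrac12\class v\bigr)^2=-\tfrac{n+3}{2}$ and $2\class s-\class v\in\class v^\perp$ (Lemma~\ref{numbers}). Your Hilbert-scheme example is precisely the instance $\class s=\class v(\O_X(-C))$, $\class t=\class v(\O_C(-n))$ of this decomposition; what is missing from your proposal is the general wall-crossing argument that produces the spherical partition without presupposing the monodromy orbit.
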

In Theorem \ref{better} we classify such curve classes (in particular nonprimitive ones) in terms of
Markman's extended weight 2 Hodge structure.  In general the numerical criteria of Theorem \ref{main} are likely not sufficient (see Example \ref{egg}), but for primitive extremal classes they are:

\begin{theorem}\label{iff}  With $M$ as above, a primitive class $R\in H_2(M,\Z)$ generating an extremal ray of the Mori cone is the line in a Lagrangian plane if and only if $(R,R)=-\frac{n+3}{2}$ and $2R\in H^2(M,\Z)$. 
\end{theorem}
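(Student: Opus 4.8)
The forward implication is precisely Theorem \ref{main}, so the entire content lies in the converse: a primitive extremal class $R$ with $(R,R)=-\frac{n+3}{2}$ and $2R\in H^2(M,\Z)$ must be the line in a Lagrangian plane. The plan is to reduce this to the monodromy-orbit classification of Theorem \ref{better}, and then to use the structure theory of extremal contractions to upgrade membership in the correct orbit into an actual geometric realization. The role of the extremality hypothesis is exactly to make this upgrade possible, since by Example \ref{egg} the numerical conditions alone cannot suffice.

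First I would carry out a lattice-theoretic computation to pin down the monodromy orbit of $R$. Working in Markman's extended weight-$2$ Hodge structure $\HalgX$, whose monodromy is governed by a finite-index subgroup of the orthogonal group of the Mukai lattice, the hypotheses that $R$ is primitive, satisfies $(R,R)=-\frac{n+3}{2}$, and has $2R\in H^2(M,\Z)$ fix both the square and the divisibility of $R$. By Eichler's criterion the orbit of such a primitive vector under the full orthogonal group is then determined by its image in the discriminant group, and I would check that these data single out the same orbit that Theorem \ref{better} attaches to Lagrangian lines. Some care is needed because the monodromy group is only of finite index in the orthogonal group, so I would verify that no orbit-splitting occurs at this particular length and divisibility.

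Next I would use extremality to produce the geometry. By the Bayer--Macr\`i, Bayer--Hassett--Tschinkel and Mongardi description of the Mori cone recalled in Section \ref{nefcone}, and in view of Theorem \ref{extreme}, a value of $(R,R)$ saturating the lower bound $-\frac{n+3}{2}$ corresponds to a wall whose associated contraction is, by the structure theorem, a fibration of $k$-planes over a $(2n-2k)$-dimensional holomorphic symplectic base. The self-intersection of a $\P^k$-fiber line is most negative when $k=n$, so the extreme value $-\frac{n+3}{2}$ together with the divisibility $2R\in H^2(M,\Z)$ should force $k=n$ and a zero-dimensional base; the exceptional locus is then a single $\P^n$, and $R$ is its line. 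A model computation of this sort is cleanest on a moduli space $M(\class{v})$ of Bridgeland-stable objects, where the wall-and-chamber structure is explicit, after which I would deform to an arbitrary \Kthree type $M$.

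The hard part will be precisely this last transport step. Extremality is not a monodromy-invariant notion, so I cannot simply move $R$ to a standard representative by a monodromy operator and read off the geometry. Instead I expect the crux to be showing that the wall data governing the contraction---and in particular the fiber dimension $k$---is constant along the monodromy orbit of an extremal class, which is what separates a genuine Lagrangian line from the spurious classes of Example \ref{egg} that share its numerical invariants but fail to be extremal. Establishing this deformation-invariance of the contraction type, and matching it against the lattice computation of the first step, is the technical heart of the converse.
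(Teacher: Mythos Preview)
Your forward direction is correct. For the converse, your route diverges from the paper's and contains both a detour and a misidentified crux.

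The paper's argument (Theorem \ref{converse}) is as follows. Given a primitive extremal $R$ with the stated numerics, the Bayer--Hassett--Tschinkel description of the Mori cone (Theorem \ref{nefconedescript}) writes the extremal generator as a multiple of $\theta^\vee(\class{a})$ for some $\class{a}\in\tilde\Lambda(M)_{\mathrm{alg}}$ with $\class{a}^2\geq -2$ and $|(\class{a},\class{v})|\leq \class{v}^2/2$; the equality $(R,R)=-\frac{n+3}{2}$ forces both bounds to be saturated, so $\class{a}$ is spherical with $|(\class{a},\class{v})|=\class{v}^2/2$, and the sublattice it spans with $\class{v}$ is $\P$ type with $R=\theta^\vee(\class{s})$. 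No Eichler-type monodromy-orbit computation is needed; Corollary \ref{bettermonodromy} is a \emph{consequence} of these theorems, not an input, so your first step is a detour.

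More importantly, the paper does not argue on $M$ via the structure theory of extremal contractions to force fiber dimension $k=n$. Instead it invokes Proposition 3 and Corollary 6 of \cite{BHT} to produce a smooth proper family keeping $R$ algebraic and specializing to a Bridgeland moduli space $M'$ in which the transported class $R'$ is still extremal; Proposition \ref{moduli} then supplies an actual Lagrangian plane $\P\subset M'$ with line class $R'$. The plane is transported back using the unobstructedness of deformations of the pair $(M',\P)$ (the results of \cite{hodge1,hodge2} recalled before Theorem \ref{better}): since $R$ stays algebraic along the family, $\P$ deforms to the generic fiber, and primitivity plus extremality of $R$ in $M$ prevent degeneration at the special fiber.

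Your proposed ``technical heart''---constancy of the fiber dimension $k$ along a monodromy orbit---is not what the paper uses and is not obviously available. The issue is not a monodromy statement but a deformation-theoretic one: the Lagrangian plane itself deforms sideways and survives specialization. If you wanted to avoid the deformation, you would need a direct proof on $M$ that the extremal contraction has a smooth $\P^n$ in its exceptional locus with line class $R$; the paper establishes this only for moduli spaces (Proposition \ref{moduli}), which is exactly why the deformation step is indispensable.
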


In view of Theorem \ref{extreme} and Theorem \ref{iff}, the extremal contractions of
Lagrangian planes are singled out as being the ``most extreme,'' in
the sense that the class of
the line achieves the minimal square of the generator of an extremal
ray in the Mori cone.  They are the higher-dimensional analog of $-2$-curves on \Kthree surfaces.  On the other side of the extremal contraction spectrum, Markman \cite{lagfib} (in the general case) and Bayer--Macr\`i \cite{BM} (in the case of Bridgeland moduli spaces) have recently resolved a long-standing conjecture asserting that a nef class $D$ with $(D,D)=0$ induces a fibration $M\into \P^n$ with Lagrangian tori fibers.  As an application of Theorem \ref{main}, we have a necessary condition for the existence of sections of Lagrangian fibrations:

\begin{corollary}  $M$ admits a Lagrangian fibration with a section only if $H_2(M,\Z)\cap H^{n-1,n-1}(M)$ contains the sublattice
\[\begin{pmatrix}
0&1\\
1&-\frac{n+3}{2}
\end{pmatrix}\]  
\end{corollary}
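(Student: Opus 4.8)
The plan is to produce the two required lattice vectors directly from the geometry of the fibration and then read off their pairings using Theorem \ref{main}. Suppose $\pi\colon M\to\P^n$ is a Lagrangian fibration admitting a section $s\colon\P^n\to M$; by the cited structure theory of Lagrangian fibrations the base is indeed $\P^n$. Set $D=\pi^*\O_{\P^n}(1)\in H^2(M,\Z)$. First I would record that $(D,D)=0$: since $D^{2n}=\pi^*(\O_{\P^n}(1)^{2n})=0$ on the $n$-dimensional base, the Fujiki relation $\int_M D^{2n}=c_M\,(D,D)^n$ forces $(D,D)=0$. As $D$ is the pullback of an ample class it is a Hodge class, and under the embedding $H^2(M,\Z)\subset H_2(M,\Z)$ it yields an integral class in $H^{n-1,n-1}(M)$, this embedding being a morphism of Hodge structures carrying $(1,1)$-classes to $(n-1,n-1)$-classes.

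Next I would verify that the section is a Lagrangian plane, so that Theorem \ref{main} applies. Writing $P=s(\P^n)\cong\P^n$, the pullback $s^*\sigma$ of the holomorphic symplectic form lies in $H^0(\P^n,\Omega^2_{\P^n})=0$, so $P$ is isotropic; since $\dim P=n=\tfrac12\dim M$, it is Lagrangian. Let $R\in H_2(M,\Z)$ be the class of a line in $P$. Theorem \ref{main} then gives $(R,R)=-\frac{n+3}{2}$ and $2R\in H^2(M,\Z)$, and $R$ is represented by an algebraic curve, so $R\in H_2(M,\Z)\cap H^{n-1,n-1}(M)$.

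Finally I would compute the cross term. Because $\pi|_P$ is inverse to $s$, it is an isomorphism $P\xrightarrow{\sim}\P^n$ sending a line to a line, whence the divisor–curve intersection is $D\cdot R=\O_{\P^n}(1)\cdot\pi_*R=1$. The only real bookkeeping point is to match this geometric number with the extended form: under $H^2\hookrightarrow H_2$ one has $(D,R)=D\cdot R$ for any divisor $D$ and curve class $R$, which follows by unwinding that the embedding sends $D\mapsto(D,-)$ and that the extension of $(\cdot,\cdot)$ is the transport of the Beauville--Bogomolov form along this isometry. Thus $D$ and $R$ are integral $(n-1,n-1)$-classes with Gram matrix $\begin{pmatrix}0&1\\1&-\frac{n+3}{2}\end{pmatrix}$; as its determinant is $-1\neq0$ they are independent and span a sublattice of $H_2(M,\Z)\cap H^{n-1,n-1}(M)$ of the required shape.

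I expect the only genuine subtlety to be confirming the normalization $(D,R)=D\cdot R$ of the extended Beauville--Bogomolov form, together with checking that the embedding $H^2\hookrightarrow H_2$ places both classes in $H^{n-1,n-1}$; everything substantive is carried by Theorem \ref{main}, so the corollary is essentially a direct application once these identifications are in place.
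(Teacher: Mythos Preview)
Your proof is correct and is exactly the argument the paper has in mind: the corollary is stated without proof as a direct application of Theorem \ref{main}, and you have supplied precisely the intended details---the section is a Lagrangian $\P^n$ so Theorem \ref{main} gives $(R,R)=-\frac{n+3}{2}$, the pullback of the hyperplane class gives the isotropic vector, and the cross-pairing is the obvious intersection number $1$. There is nothing to add.
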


Lagrangian planes are also of interest because twisting by their structure sheaf yields an autoequivalence of the derived category $D^b(M)$ in much the same way that a smooth rational curve on a \Kthree  surface yields a spherical twist (see \cite{huythom}).

%In the course of the proof we will deduce as an easy consequence of \cite{BHT} a result that is interesting in its own right:  a Lagrangian $n$-planes in a \Kthree type variety $M$ is contractible in some birational model of $M$:
%
%\begin{theorem}\label{introcontract}  Let $M$ be as in Theorem \ref{main}, and suppose $\P^n\subset M$ is a Lagrangian $n$-plane.  There is a birational model $M'$ of $M$ and a birational morphism $f:M\dashrightarrow M'$ whose exceptional locus does not meet $\P^n$ such that $f(\P^n)$ is contractible in $M'$.
%\end{theorem}
%
%Theorem \ref{introcontract} answers a question raised in \cite{projspaces} in the \Kthree type case.  Note that it may be necessary to move to a distinct birational model $M'$ before the Lagrangian plane becomes contractible, and a contractible Lagrangian plane is not necessarily floppable.

Theorem \ref{main} was demonstrated for
varieties of \K{2} type by Hassett--Tschinkel \cite{moving}, for those of \K{3} type by Harvey--Hassett--Tschinkel \cite{HHT}, and for those of \K{4} type by the author and A. Jorza
\cite{laghyp} using the representation theory of the monodromy
group to exhibit possible classes of lines sweeping out a Lagrangian plane 
as integral points on arithmetic curves.  As a product of the analysis, in all three cases a universal formula for the class $[\P^n]\in H_{2n}(M,\Z)$ in terms of Hodge classes and the class of the line $R$ is determined.  It follows that there is a unique
orbit of the classes of such lines under the Zariski closure of the
monodromy group $\Mon(M)$.  We conclude from Theorem \ref{main} that the same is true without taking the Zariski closure for primitive classes:

\begin{corollary}\label{bettermonodromy} The primitive classes $R\in H_2(M,\Z)$ occurring as the line in a Lagrangian plane belong to a single $\Mon(M)$ orbit.
\end{corollary}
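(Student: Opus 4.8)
The plan is to reduce the corollary to a transitivity property of the monodromy action on primitive lattice vectors of a fixed numerical type, feeding in the precise invariants supplied by Theorem \ref{main}. The earlier works \cite{moving,HHT,laghyp} established the analogous orbit statement only after replacing $\Mon(M)$ by its Zariski closure; the input that removes the closure is that Theorem \ref{main} (refined in Theorem \ref{better}) determines the discrete invariants of a line $R$ exactly. First I would record that every primitive $R\in H_2(M,\Z)$ that is a line in a Lagrangian plane has square $(R,R)=-\frac{n+3}{2}$ and satisfies $2R\in H^2(M,\Z)$. Because $R$ is primitive, the image $\bar R$ of $R$ in the finite quotient $H_2(M,\Z)/H^2(M,\Z)$—the discriminant group of the Beauville--Bogomolov lattice—is $2$-torsion, so $R$ is pinned down by its square together with this discriminant class.

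Next I would invoke Markman's computation of the monodromy group: for $M$ of \Kthree type, $\Mon(M)$ acts on $H^2(M,\Z)$ as the group of orientation-preserving isometries inducing $\pm 1$ on the discriminant group, and this action extends compatibly to $H_2(M,\Z)$ and to the extended weight-$2$ Hodge structure $\Mukai$ of Theorem \ref{better}. Since the Beauville--Bogomolov lattice of a \Kthree-type variety contains two orthogonal hyperbolic planes, Eichler's criterion ensures that primitive vectors of equal square and equal discriminant class form a single orbit under the stable orthogonal group; moreover, as $\bar R$ is $2$-torsion it is fixed by $-1$, so the $\pm 1$ indeterminacy in Markman's description is vacuous here. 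Using the refined classification of Theorem \ref{better} to identify the precise (distinguished order-two) discriminant class carried by every line, I would conclude that all primitive lines share one numerical type and hence lie in a single $\Mon(M)$ orbit; the identification of $H_2$ across deformations is supplied by the parallel-transport operators, which is exactly the content required for the statement.

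The main obstacle is the lattice bookkeeping needed to apply Eichler's criterion in this setting, both because $H_2(M,\Z)\supset H^2(M,\Z)$ is the dual lattice rather than an even lattice and because the square $-\frac{n+3}{2}$ is only a half-integer when $n$ is even. I expect to handle this by passing to the integral class $S=2R\in H^2(M,\Z)$, with $(S,S)=-2(n+3)$, and translating the divisibility of the primitive class $R$ in $H_2(M,\Z)$ into the divisibility of $S$ in $H^2(M,\Z)$, so that Eichler's lemma applies to the even lattice $H^2(M,\Z)$ directly. The genuinely substantive point is to rule out the spurious possibility that a line could carry the trivial discriminant class (which can occur numerically when $n\equiv 1\pmod 4$) and thereby split the lines into two orbits; this is precisely where the finer Hodge-theoretic classification of Theorem \ref{better}, rather than the purely numerical criteria of Theorem \ref{main}, is indispensable.
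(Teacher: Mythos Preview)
Your approach is correct and essentially identical to the paper's: both feed the numerical invariants from Theorem~\ref{main} (refined, via Lemma~\ref{numbers} in the proof of Corollary~\ref{prim}, to the statement that $R$ has order exactly~$2$ in the discriminant group) into Eichler's transitivity criterion, and then invoke Markman's result that $\tilde{O}(H^2(M,\Z))$ sits inside $\Mon^2(M)$. The paper's argument is terser and does not explicitly pass to $S=2R$ or isolate the discriminant-class issue you flag, but that concern is exactly what Lemma~\ref{numbers} resolves, so the substance is the same.
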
 

In general it may not be the case that the class of the line is primitive (see Remark \ref{primrem}).  We expect our method to also allow for an intersection-theoretic classification of Lagrangian planes in holomorphic symplectic manifolds deformation-equivalent to generalized Kummer varieties, using the work of \cite{yoshkummer}.  It was conjectured by Hassett and Tschinkel that an analog of Theorem \ref{main} is true but with $(R,R)=-\frac{n+1}{2}$, and the necessity of the numerical conditions was established in the case of fourfolds \cite{kummer}.  Corollary
\ref{bettermonodromy}, however, is not salvageable in this case; it is expected (and proven for $n=2$ \cite{kummer}) that there will always be multiple monodromy orbits.
\subsection*{Outline}  In Section \ref{bayermacri} we summarize the theory of Bridgeland stability conditions on \Kthree surfaces, and the Bayer--Macr\`i description of the nef cones of Bridgeland moduli spaces.  In Section \ref{summary} we define and give some examples of Lagrangian planes and Grassmannians.  In Section \ref{bridgeland} we prove classify Lagrangian planes for Bridgeland moduli spaces, and discuss other contractible Lagrangian subvarieties.  In Section \ref{general} we extend the classification to arbitrary \Kthree type varieties by deformation. 
\subsection*{Acknowledgements}  The author would like to thank Brendan Hassett, Eyal Markman, and Yuri Tschinkel for many helpful discussions.  Particular thanks goes to Arend Bayer for correcting an error in a previous version of this note and enlightening discussions related to Section \ref{general}.  The author was supported by NSF fellowship DMS-1103982 during the time this work was completed.

\section{Bayer--Macr\`i Description of the nef Cone}\label{bayermacri}
We very briefly summarize the basic theory of Bridgeland stability
conditions and the Bayer--Macr\`i description of the nef cone of
Bridgeland moduli spaces on \Kthree surfaces, or at least as much as
we will need.  For background on the first, see Bridgeland's
original paper \cite{bridgelandk3} or Macr\`i's survey in
\cite[Appendix D]{FMbook}.  For the second, Bayer and
Macr\`i develop the theory for general stability conditions in
\cite{BM1} and apply it to the case of \Kthree surface in \cite{BM}, and our summary is mainly taken from their treatment.
\subsection{}\label{1}

Let $X$ be a smooth projective variety, $D^b(X)$ its bounded
derived category of coherent sheaves.  A stability
condition $\sigma=(Z,\mathcal{P})$ consists of a (group) homomorphism
$Z:K(X)\into \C$ and full extension-closed abelian subcategories
$\mathcal{P}(\phi)\subset D^b(X)$ for each $\phi\in \R$ such that
\begin{itemize}
\item Any $0\neq E\in \mathcal{P}(\phi)$ has $Z(E)\in\R_{>0}e^{i\pi\phi}$;
\item $\mathcal{P}(\phi+1)=\mathcal{P}(\phi)[1]$ for all $\phi\in\R$;
\item $\Hom(\mathcal{P}(\phi),\mathcal{P}(\phi'))=0$ if $\phi>\phi'$;
\item Any $0\neq E\in D^b(X)$ has a Harder-Narasimhan filtration,
  \emph{i.e.} there is a sequence in $D^b(X)$
\[0=E_0\into E_1\into\cdots \into E_{n-1}\into E_n=E\]
whose factors $A_i$, defined as the cones
\[E_{i-1}\into E_i\into A_i\into E_{i-1}[1]\]
satisfy $A_i\in \mathcal{P}(\phi_i)$ with
\[\phi_1>\phi_2>\cdots>\phi_{n-1}>\phi_n\]
\end{itemize}
In this case we denote $\phi^+(E)=\phi_1$ and $\phi^-(E)=\phi_n$, and for any interval $I\subset\R$, $\mathcal{P}(I)\subset D^b(X)$
is defined as the full subcategory of $E\in D^b(X)$ for whom
$[\phi^-(E),\phi^+(E)]\subset I$.  It is easy to show that the
stability condition $\sigma$ induces a $t$-structure on $D^b(X)$ whose
heart is $\mathcal{P}((0,1])$.

\subsection{}\label{2}
The homomorphism $Z$ is called the central charge, and we say that
$0\neq E\in D^b(X)$ has phase $\phi$ if $Z(E)\in\R_{>0}e^{i\pi\phi}$.  The objects of
$\mathcal{P}(\phi)$ are called $\sigma$-semistable, while the simple
objects are called $\sigma$-stable.  Under mild technical assumptions
(fullness, see \cite{bridgeland}),
the categories $\mathcal{P}(\phi)$ are Artinian,
and every $\sigma$-semistable object $E\in \mathcal{P}(\phi)$ has a
Jordan--H\"older filtration in the above sense with $\sigma$-stable
factors of phase $\phi$.  Two $\sigma$-semistable objects $E,E'\in
\mathcal{P}(\phi)$ are $S$-equivalent if their Jordan-H\"older factors
are the same (up to permutations).

\subsection{}\label{3}
We further say that the stability condition $\sigma$ is
numerical if the central charge $Z$ factors through the cokernel
$K_\mathrm{num}(X)$ of
the Chern character $\ch:K(X)\into H^*(X,\Q)$.  Let $\Stab(X)$ be the space of numerical stability conditions.
It has a natural metric topology \cite[Proposition 8.1]{bridgeland}
which makes the map $Z:\Stab(X)\into \Hom(K_{\mathrm{num}}(X),\C)$ a
local homeomorphism.  There is a particularly well-behaved connected
component $\Stab^\dagger(X)\subset\Stab(X)$ containing stability
conditions for which the structure sheaves of points $k(x)$ are
all stable of the same phase.  Numerical stability conditions have been constructed on surfaces (see
\cite{bridgelandk3} for \Kthree surfaces and \cite{bertram} in
general).
\vskip 1em
For the remainder of this section we restrict our attention to \Kthree surfaces $X$.  We
note in passing that we could just as easily work throughout with a
\Kthree surface $X$ twisted by a Brauer class $\alpha\in \Br(X)$ using
the results of \cite{twisted}, and in fact this is extremely useful
(\emph{e.g.} the use of Lemma 6.3 in the proof of Theorem 1.3 in
\cite{BM1}, based on the idea of \cite{MYYY}).

\subsection{}\label{4}
The Mukai lattice $\tilde{H}(X,\Z)$ is
the full cohomology of $X$
\[\tilde{H}(X,\Z)=H^0(X,\Z)\oplus H^2(X,\Z)\oplus H^4(X,\Z)\]
endowed with the (pure) weight 2 Hodge structure determined by
$\tilde{H}^{2,0}=H^{2,0}$.  The Mukai pairing of two vectors
$\class{a}=(r,D,s),\class{b}=(r',D',s')\in \tilde{H}(X,\Z)$ is 
\[(\class{a},\class{b})=D.D'-rs'-r's\]
For any object $E\in D^b(X)$, the Mukai vector of $E$ is 
\[\class{v}(E)=\ch(E)\sqrt{\Td(X)}=(\ch_0(E),\ch_1(E),\ch_2(E)+\ch_0(E))\]
Recall that for a \Kthree surface, $\ch:K_\mathrm{num}(X)\into
\tilde{H}(X,\Z)$ is an integral isomorphism.  Thus, by Grothendieck--Riemann--Roch, for $E,F\in D^b(X)$,
\[\chi(R\Hom(E,F))=-(\class{v}(E),\class{v}(F))\]
The algebraic Mukai lattice is defined to be the integral classes in the $(1,1)$ part of the
Mukai lattice, $\tilde{H}_\mathrm{alg}(X,\Z)=\tilde{H}^{1,1}\cap \tilde{H}(X,\Z)$.  Note that the Chern character endows
$K_\mathrm{num}(X)$ with a natural (pure) weight 2 Hodge structure
such that $\ch:K_\mathrm{num}(X)\xrightarrow{\cong}\tilde{H}(X,\Z)$,
and this is perhaps a more elegant definition of the Mukai lattice.

The central charge $Z$ of any numerical stability condition on $X$ can be
represented as $Z(\cdot)=(\Omega_Z,\cdot)$ for a unique $\Omega_Z\in
\HalgX\otimes\C$; we denote the resulting map by the same letter
$\Omega:\Stab(X)\into \HalgX\otimes\C$.

\subsection{}\label{5}
For any Mukai vector $\class{v}\in \HalgX$, the space $\Stab(X)$ has a wall and chamber structure with
respect to $\class{v}$.  That is, there is a locally finite collection
of codimension 1 submanifolds
called walls such that for $\sigma$ off a wall, the set of $\sigma$-stable
objects $E$ (and
thus also the set of 
$\sigma$-semistable objects) is locally constant.  A connected
component of the complement in $\Stab(X)$ of the union of all walls is
called
an open chamber, and its closure a closed chamber.  We say that a
stability condition $\sigma$ is generic with respect to $\class{v}$ if
it lies in an open chamber.  

\subsection{}\label{6}
By \cite[\S 5]{BM}, every (codimension 1) wall
$\mathcal{W}\subset\Stab(X)$ with respect to $\class{v}$ has an associated saturated signature
(1,1) sublattice $\class{v}\in\mathcal{H}\subset\HalgX$, intrinsically
described as the set of $\class{w}\in \HalgX$ for which $Z(\class{w})$
and $Z(\class{v})$ are $\R$-linearly dependent (\emph{i.e.}
$\Im\frac{Z(\class{w})}{Z(\class{v})}=0$) for all
$\sigma\in\mathcal{W}$.  Not all such hyperplanes
$\mathcal{H}$ arise in this way, but we can always associate to $\mathcal{H}$ the
\emph{potential wall} $\mathcal{W}\subset\Stab(X)$ of stability conditions
$\sigma$ such that $Z(\mathcal{H})$ lies on a real line.  In this case
we say $\class{w}\in\mathcal{H}$ is effective if there is a $\sigma$-semistable object with Mukai vector
$\class{w}$ for a generic $\sigma\in\mathcal{W}$, and we define the
effective cone of the potential wall $\mathcal{C}\subset \mathcal{H}\otimes\R$ to be the real
cone generated by effective classes $\class{w}\in\mathcal{H}$. 

\subsection{}\label{7}
Fixing $\sigma$, for any algebraic
space $S$ we say that an $S$-perfect
$E\in D^b(S\times X)$ is a \emph{flat family} if the derived restriction $E_s:=i_s^*E\in
D^b(X)$ is in the heart $\mathcal{P}((0,1])$ for all closed points $s\in S$.  We say $E$ is a flat
family of $\sigma$-(semi)stable objects of Mukai vector $v$ and phase $\phi$ if $E_s$
is $\sigma$-(semi)stable with $\class{v}(E_s)=\class{v}$ for all
closed points $s\in S$.  By \cite{lieb},\cite{toda}, and \cite{inaba},
\begin{itemize}
\item The stack $\frak{M}_\sigma(\class{v},\phi)$ (resp. $\frak{M}^s_\sigma(\class{v},\phi)$)
of flat families of $\sigma$-semistable (resp. $\sigma$-stable) objects is an Artin stack of
finite type over $\C$ with coarse space $M_\sigma(\class{v},\phi)$ (resp. $M^s_\sigma(\class{v},\phi)$).
\item $\frak{M}^s_\sigma(\class{v},\phi)\subset
  \frak{M}_\sigma(\class{v},\phi)$ is an open substack.
\item $\frak{M}^s_\sigma(\class{v},\phi)$ is a $\G_m$-gerbe over a
  symplectic algebraic space $M^s_\sigma(\class{v},\phi)$.
\item If $\frak{M}^s_\sigma(\class{v},\phi)=
  \frak{M}_\sigma(\class{v},\phi)$ then $M_\sigma(\class{v},\phi)$ is
  proper.
\end{itemize}
The situation is even better for a generic stability condition, by
results of \cite{yoshioka} and \cite{toda}:
\begin{theorem}[Theorem 2.13, \cite{BM}]\label{todaexist}
 Suppose $\sigma$ is generic with repect to $\class{v}$.  If $\class{v}=m\class{v}_0$ for $\class{v}_0$ primitive with
  $\class{v}_0^2\geq -2$ and
  $m>0$, then $M_\sigma(\class{v},\phi)(\C)$ is nonempty if and only
  if $\class{v}_0^2\geq -2$.  Furthermore, 
  \begin{enumerate}
  \renewcommand{\theenumi}{\roman{enumi}}
  \item if $\class{v}_0^2> 0$ then $M_\sigma(\class{v},\phi)$ is of the expected
  dimension $\class{v}^2+2$;
  \item   if $m=1$ then there are no strictly $\sigma$-semistable objects of Mukai vector
$\class{v}$ and $M_\sigma(\class{v},\phi)$ is
  smooth and projective of dimension $\class{v}^2+2$.
  \end{enumerate}
\end{theorem}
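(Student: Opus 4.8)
The plan is to separate the local assertions (smoothness and the dimension count), which follow from deformation theory on the stable locus, from the global assertions (nonemptiness and properness), which I would reduce to the classical Gieseker case by a large-volume-limit argument and then propagate by wall-crossing.

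First, the smoothness and dimension on the stable locus. For a $\sigma$-stable object $E$ the category $\mathcal{P}(\phi)$ is Artinian and $E$ is simple, so $\Hom(E,E)=\C$; since $X$ is a \Kthree surface $\omega_X\cong\O_X$ and Serre duality gives $\Ext^2(E,E)\cong\Hom(E,E)^\vee\cong\C$, while the $t$-structure together with Serre duality forces $\Ext^i(E,E)=0$ for $i\notin\{0,1,2\}$. The Euler-characteristic identity of \ref{4}, $\sum_i(-1)^i\dim\Ext^i(E,E)=-(\class{v},\class{v})=-\class{v}^2$, then yields $\dim\Ext^1(E,E)=\class{v}^2+2$. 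This is the Zariski tangent space to $M^s_\sigma(\class{v},\phi)$ at $[E]$; the Kuranishi obstruction lands in the trace-free part $\Ext^2(E,E)_0$, which vanishes because the one-dimensional $\Ext^2(E,E)$ is entirely accounted for by the trace map to $H^2(\O_X)$. Hence $M^s_\sigma(\class{v},\phi)$ is smooth of dimension $\class{v}^2+2$, proving (i) on the stable locus and supplying the holomorphic symplectic form via the Mukai pairing on $\Ext^1$.

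For nonemptiness and properness I would pass to a large-volume limit. Within $\Stab^\dagger(X)$ there is a distinguished Gieseker chamber in which $\sigma$-stability coincides with Gieseker stability for a polarization $H$, so that $M_\sigma(\class{v},\phi)$ is identified with the classical moduli space $M_H(\class{v})$. The theorem is known there by \cite{yoshioka}: $M_H(\class{v})$ is nonempty precisely when $\class{v}_0^2\geq-2$, has dimension $\class{v}^2+2$ when $\class{v}_0^2>0$, and is a smooth projective holomorphic symplectic variety when $\class{v}$ is primitive. To transport these conclusions to an arbitrary generic $\sigma$, I would use the connectedness of $\Stab^\dagger(X)$ and the local finiteness of the wall-and-chamber structure of \ref{5}: choose a path from the chamber of $\sigma$ to the Gieseker chamber crossing the finitely many intervening walls transversally. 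On each open chamber the set of $\sigma$-(semi)stable objects is constant, and I must show that crossing a single wall preserves nonemptiness, the dimension count, smoothness, and properness. Boundedness of the families of semistable objects—so that the stacks of \ref{7} stay of finite type along the path—and the Bayer--Macr\`i positivity construction, producing a nef class $\ell_\sigma$ on $M_\sigma(\class{v},\phi)$ that is ample for generic $\sigma$, are what upgrade properness to projectivity.

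Finally, part (ii): when $m=1$ a strictly $\sigma$-semistable object of class $\class{v}$ would admit a proper stable subobject whose Mukai vector $\class{w}$ satisfies $\Im\frac{Z(\class{w})}{Z(\class{v})}=0$, placing $\sigma$ on a wall in the sense of \ref{6}; primitivity of $\class{v}$ and genericity of $\sigma$ exclude this, so $\mathfrak{M}^s_\sigma(\class{v},\phi)=\mathfrak{M}_\sigma(\class{v},\phi)$ and $M_\sigma(\class{v},\phi)$ is proper by the last bullet of \ref{7}, hence smooth and projective of dimension $\class{v}^2+2$ once combined with the ample class. I expect the principal difficulty to lie in the wall-crossing propagation of nonemptiness—controlling how the locus of semistable objects, and with it the moduli space, degenerates as a wall is approached—rather than in the deformation theory or the reduction to the Gieseker chamber, both of which become essentially formal once boundedness is in hand.
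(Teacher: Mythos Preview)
The paper does not give its own proof of this theorem: it is quoted as Theorem 2.13 of \cite{BM} and attributed there to ``results of \cite{yoshioka} and \cite{toda}.'' There is therefore nothing in the present paper to compare your proposal against.

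That said, your sketch is broadly in line with how the result is established in the cited sources. The deformation-theoretic computation of the tangent space and the vanishing of the trace-free obstruction is the standard Mukai argument; the strategy of reducing to a large-volume Gieseker chamber, invoking Yoshioka's classical results, and then propagating by wall-crossing is exactly that of Toda and Bayer--Macr\`i. One caveat worth flagging: a chamber in $\Stab^\dagger(X)$ in which Bridgeland stability literally coincides with Gieseker $H$-stability exists only when the rank component of $\class{v}$ is positive; for rank zero or negative one must first apply a suitable autoequivalence (shift, line-bundle twist, or spherical twist) to arrange this, as is done in \cite{BM1} and \cite{toda}. Your identification of the wall-crossing propagation of nonemptiness as the substantive step is accurate---this is precisely what Toda's argument via Joyce's motivic Hall-algebra machinery supplies, and it is not something one fills in formally.
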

Henceforth we will typically drop the phase $\phi$ from the
notation (because it is determined up to shifts by $\class{v}$).
\subsection{}\label{8}
By \ref{5}, the spaces
$M=M_{\tau}(\class{v})$ are canonically identified as $\tau$
varies in an open chamber $\Delta$.  For any
$\sigma\in\Stab^\dagger(X)$ Bayer--Macr\`i \cite[Lemma 3.3]{BM1} construct a divisor class $\ell_\sigma$ on $M$,
which evaluates on any map $C\into \frak{M}_{\tau}(\class{v})$ from
a projective curve $C$ with associated flat family $E\in D^b(C\times
X)$ as 
\[\ell_\sigma.C=-\Im\left(\frac{Z(q_*E)}{Z(\class{v})}\right)\]
where $q_*$ is the derived pushforward along the second projection
$q:C\times X\into X$.  $\ell_\sigma.C=0$ if and only if $E_c$ and
$E_{c'}$ are $S$-equivalent for generic $c,c'\in C$.  We have
\begin{theorem}[Theorem 1.2 of \cite{BM}]  The resulting map
  $\ell:\Stab(X)\into H^2_{\mathrm{alg}}(M,\Z)\otimes\R$ is piecewise analytic.  Further,
\begin{enumerate}
\renewcommand{\theenumi}{\roman{enumi}}
\item The image of $\ell$ is the intersection of the big and movable cones of
  $M$, and the birational model associated to
  $\ell_\sigma$ (lying in an open chamber of the movable cone decomposition) is $M_\sigma(\class{v})$.
\item $\ell$ maps the closed chamber $\bar{\Delta}_\sigma$ containing a generic stability condition $\sigma$ to the nef cone of $M_\sigma(\class{v})$.
\end{enumerate}
\end{theorem}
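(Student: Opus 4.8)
The plan is to take as given the curve-evaluation formula $\ell_\sigma.C=-\Im(Z(q_*E)/Z(\class{v}))$ and its vanishing criterion from \ref{8}, and to bootstrap from it both the nefness of $\ell_\sigma$ and the identification of the full image. The piecewise analyticity is immediate from the construction of $\ell_\sigma$ out of $Z$, which varies real-analytically with $\sigma$ through the local homeomorphism $\Omega$ of \ref{3}, the formula changing only across the walls of \ref{6}.

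First I would prove the Positivity Lemma: for $\sigma$ generic with respect to $\class{v}$, the class $\ell_\sigma$ is nef on $M=M_\sigma(\class{v})$, with $\ell_\sigma.C=0$ exactly as stated. After rotating the central charge so that $Z(\class{v})\in\R_{>0}$, the quantity $\ell_\sigma.C$ is proportional to $-\Im Z(q_*E)$, and its sign is controlled by decomposing $q_*E$ through the Harder--Narasimhan filtration of \ref{1}: because $C$ is a projective curve and each fiber $E_c$ is $\sigma$-semistable of phase $\phi$, the factors of $q_*E$ are confined to phases in a length-one interval adjacent to $\phi$, on which $-\Im(Z(\cdot)/Z(\class{v}))$ has the correct sign, with equality precisely when the generic fibers are $S$-equivalent. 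For $\sigma$ in an open chamber $\Delta$ the primitive case of Theorem \ref{todaexist} guarantees no strictly semistable objects of Mukai vector $\class{v}$, so distinct closed points of $M$ are non-$S$-equivalent simple objects and $\ell_\sigma.C>0$ for every curve; thus $\ell_\sigma$ lies in the ample cone for $\sigma$ in the open chamber $\Delta$ and remains nef on the closed chamber $\bar{\Delta}_\sigma$, giving one inclusion of (ii).

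Next I would use wall-crossing to organize the image. As $\sigma$ ranges over $\Delta$ the space $M_\sigma(\class{v})$ is canonically fixed by \ref{5}, so $\ell$ restricted to $\Delta$ maps into a single nef cone. Crossing a wall $\mathcal{W}$ as in \ref{6} changes the set of stable objects and induces a birational map $M_\sigma\birational M_{\sigma'}$ that is an isomorphism in codimension one; under this identification the nef cones of the two models meet along $\ell(\mathcal{W})$, so the images of adjacent closed chambers glue into a chamber decomposition of the movable cone whose open pieces are the nef cones of the various birational models $M_\sigma(\class{v})$. Since $\ell$ is built linearly from the local homeomorphism $\Omega$, it is an open map on each chamber, so its image is open in the big cone and, by the gluing, contained in the movable cone.

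The main obstacle is the reverse inclusion in (i): that \emph{every} class of the big cone lying in the movable cone is attained, equivalently that the Bridgeland wall-and-chamber structure exhausts the wall structure of the movable cone of $M$. Here I would invoke the global Torelli theorem together with Markman's Hodge-theoretic description of the movable cone, whose walls are cut out by classes of prime exceptional divisors and isotropic (Lagrangian-fibration) classes. The work is to match each such wall with a potential wall $\mathcal{W}\subset\Stab^\dagger(X)$ and its associated rank-two sublattice $\mathcal{H}$ of \ref{6}, a lattice-theoretic bookkeeping using that the Mukai morphism $\theta:\class{v}^\perp\into\NS(M)$ is a Hodge isometry and that $\Stab^\dagger(X)$ covers the relevant period domain; no movable wall can be missed, since otherwise crossing it would produce a birational model not of the form $M_\sigma(\class{v})$, contradicting the covering. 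With surjectivity in hand, statement (i)---image equals the intersection of the big and movable cones, with birational model $M_\sigma(\class{v})$---and the remaining content of (ii)---that $\bar{\Delta}_\sigma$ maps \emph{onto} the nef cone---follow simultaneously.
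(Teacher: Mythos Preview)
The paper does not prove this statement: it is quoted verbatim as Theorem 1.2 of \cite{BM} in the background section \ref{bayermacri}, with no argument supplied. There is therefore no ``paper's own proof'' to compare your proposal against.

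For what it is worth, your sketch is broadly faithful to the Bayer--Macr\`i strategy. The Positivity Lemma you describe is their key technical input (proved in \cite{BM1}), and the wall-crossing gluing of nef cones into the movable cone is the organizing principle of \cite{BM}. Two cautions, though. First, the phase bound on the Harder--Narasimhan factors of $q_*E$ is not as automatic as you suggest: one needs a semistable-reduction-type argument to control the HN filtration of a family over a curve, and this is where most of the work in the Positivity Lemma lies. Second, your treatment of the reverse inclusion in (i) is too breezy. The matching of movable-cone walls with Bridgeland walls is not a pure ``covering'' argument; Bayer--Macr\`i carry out a detailed case analysis of the hyperbolic sublattices $\mathcal{H}$ (classifying walls as divisorial, flopping, or fake) and verify directly that every birational model appears as some $M_\sigma(\class{v})$, rather than deducing this by contradiction from a covering property of $\Stab^\dagger(X)$. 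Your proposal captures the architecture but underestimates the amount of explicit lattice combinatorics required.
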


\subsection{} \label{nefcone}
Suppose $E\in D^b(X\times M)$ is a universal object (though a
quasiuniversal object would suffice).  We define a map
$\theta:\class{v}^\perp\into H^2_\mathrm{alg}(M,\Z)$ via
\[\class{v}^\perp\xrightarrow{-\class{v}^{-1}(\cdot)^\vee}K_\mathrm{num}(X)\xrightarrow{\Phi_E}K_\mathrm{num}(M)\xrightarrow{\det}H^2_\mathrm{alg}(M,\Z)\]
where $\Phi_E(\cdot)=q_*(E\otimes p^*(\cdot))$ is the $K$-theoretic
Fourier-Mukai transform, and $p:X\times M\into X$ is the first projection.  $\theta$ is an isometry, and there is a dual map
$\theta^\vee:\HalgX\into H^\mathrm{alg}_2(M,\Z)$.  By \ref{6} every face of the nef cone of $M$
has the form $\theta(\mathcal{H}^\perp)$, and by \cite{BM} such an
$\mathcal{H}$ can be taken to contain $\class{a}\in\mathcal{H}$ with
$\class{a}^2\geq -2$ and
$|(\class{a},\class{v})|\leq\frac{\class{v}^2}{2}$.  Similarly, ever face of the movable cone is of the form $\theta(\mathcal{H}^\perp)$ for $\mathcal{H}$ containing $\class{a}$ with either:  $\class{a}^2=-2$ and $(\class{a},\class{v})=0$; or $\class{a}^2=0$ and $(\class{a},\class{v})=1$ or $2$.  These three possibilities correspond to divisorial contractions of Brill--Noether, Hilbert--Chow, and Li--Gieseker--Uhlenbeck type, respectively.

\subsection{}\label{10}(See Section 14 of \cite{BM}.)
Given a hyperbolic lattice $\mathcal{H}\subset\HalgX$ with potential wall $\mathcal{W}$ and effective cone $\mathcal{C}$, there are only finitely many $\class{a}\in\mathcal{C}$ for which $\class{v}-\class{a}\in \mathcal{C}$.  By the results of \cite{BM}, we can therefore usually assume that $\mathcal{W}$ is not totally semistable with respect to any effective class.  
We define a partition $P=[\class{v}=\sum \class{a}_i]$ of $\class{v}$ to be a set of Mukai vectors $\{\class{a}_i\}$ with $\class{v}=\sum\class{a}_i$, and a second partition $P'=[\class{v}=\sum \class{b}_i]$ is a refinement of $P$ if it can be obtained by partitioning each $\class{a}_i$.  With the above hypothesis, there is an associated locally closed stratum $M_P\subset M_\sigma(\class{v})$ of objects whose Jordan-H\"older factors with respect to a generic $\sigma_0\in\mathcal{W}$ have Mukai vectors $\class{a}_i$.  Moreover, a the stratum $M_{P'}$ lies in the closure of a stratum $M_P$ if and only if $P'$ refines $P$. 

% \begin{definition}The positive cone of $\HalgX$ is the closure of the
%   cone in $\HalgX\otimes \R$ generated by $\{\class{a}\in\HalgX|(\class{a},\class{a})>0\}$.
%   The Mukai cone associated to $\class{v}\in\HalgX$ is
%   the closed cone in $\HalgX\otimes\R$  
% \end{definition}   
% \begin{theorem}
% \end{theorem}

% For convenience we
% make the following definition.
% \begin{definition}\label{defn}Let $X$ be a \Kthree surface and choose
%   some ample $\omega$.  The Mukai cone of a $X$ is the
%   real cone in $\HalgX\otimes\R$ generated by
% \[\left\{\class{a}\in \HalgX|\class{a}^2\geq-2,|(\class{a},\class{v})|\leq\frac{\class{v}^2}{2},\mathrm{\;and\;}(e^{i\omega},\class{a})>0\right\}\]
% \end{definition}
% We then have
% \begin{theorem}[Theorem 12.2 in \cite{BM}]  The Mori cone of $M$ is
%   generated by the positive cone and the image of the Mukai cone under $\theta^\vee$.
% \end{theorem}
\section{Lagrangian planes}\label{summary}
%\subsection{Lagrangian planes}
A Lagrangian subspace of a symplectic vector space is a
maximal isotropic subspace; in particular, it is half-dimensional.
Let $M$ be a holomorphic symplectic variety of dimension $2n$.  A \emph{Lagrangian subvariety} is an embedded smooth subvariety $Z\subset M$ whose tangent space at each point is a Lagrangian subspace.  Note that $\Omega_Z\cong N_{Z/M}$, where $N_{Z/M}$ is the normal bundle of $Z$ in $M$.  A \emph{Lagrangian plane}
$\P\subset M$ is a Lagrangian subvariety isomorphic to projective space $\P=\P^n$.

A Lagrangian plane $\P\subset M$ can always be contracted to a point in the analytic category, and in fact in the category of algebraic spaces:  there is an algebraic space $M'$ and a map $f:M\into M'$ such that $f(\P)=p$ is a point and $f:M\smallsetminus \P\xrightarrow{\cong}M'\smallsetminus p$.  It may happen that $M'$ is not projective.  We say that $\P$ is \emph{extremal} if the class $R\in H_2(M,\Z)$ of the line in $\P$ generates an extremal ray of the Mori cone.  Note that the exceptional locus of the associated extremal contraction may strictly contain $\P$.
\begin{example}\label{examples}
\renewcommand{\theenumi}{\roman{enumi}}
\begin{enumerate}
\item The prototypical example of a Lagrangian plane is the zero section $\P\subset \mathbb{A}(\Omega_\P)$ in the total space of the cotangent bundle $\Omega_\P$.  Blowing up $X=\mathbb{A}(\Omega_\P)$ at $\P$, the exceptional fiber is isomorphic to the universal hyperplane in $\P\times\P^\vee$, and can be blown down to the second factor yielding a smooth manifold $X'$ containing $\P^\vee$.  This is called the Mukai flop, and $X$ and $X'$ admit contractions of $\P$ and $\P^\vee$, respectively, to the same analytic space $X_0$.  Any Lagrangian plane $\P\subset M$ is locally analytically isomorphic to the Mukai flop, and can similarly be flopped to a complex manifold $M'$ (again, even an algebraic space if $M$ is a variety), but the resulting manifold $M'$ need not be K\"ahler.
\item Let $X$ be a \Kthree surface containing a smooth rational curve $C\cong \P^1\subset X$.  Let $X\into X'$ be the contraction of $C$ to a double point.  There is a natural embedding $\Sym^n C\cong \P^n\subset X^{[n]}$, and the plane $\P^n$ is contractible via the Hilbert--Chow morphism $S^{[n]}\into \Sym^n X'$, though the subscheme $2\delta$ of nonreduced subschemes is contracted as well.
\item Here is an example due to Namikawa of a Lagrangian plane whose flop is not projective \cite[Example 1.7.ii]{namikawa}.  Let $X\into \P^1$ be a projective elliptic \Kthree surface with two $I_3$ fibers (\emph{i.e.} cycles of three smooth rational curves), one of which is $E_1+E_2+E_3$.  As in the previous example, there are three disjoint Lagrangian planes $E_i^{(2)}\subset X^{[2]}$, and flopping all three yields a nonprojective manifold.
\item If $L$ is an effective line bundle on a \Kthree surface $X$ such that every section of $L$ is reduced and irreducible, then let $\mathcal{C}\subset X\times \P$ be the universal divisor over $\P=\P H^0(L)^\vee$.  The compactified relative Jacobian $\overline\Pic_\P(\mathcal{C})\into \P$ is a moduli space of stable sheaves on $X$, and any section is a Lagrangian plane.  In particular, the structure sheaf gives a section of $\overline\Pic_\P^0(\mathcal{C})$.
\end{enumerate}
\end{example}

The machinery of the previous section (see \ref{nefcone}) allows one to very concretely describe the nef and movable cones of moduli spaces, and we give here an in-depth look at Hilbert schemes $X^{[2]}$ of two points on a \Kthree surface $X$ with Picard rank one.  In this case the classification of birational transformations reduces to two Pell's equations (see \cite[\S 13]{BM}).  

Let $\Pic(X)=\Z h$ with $h$ the ample generator of degree $h^2=2d$.  As described in the introduction, we have an isomorphism
\[H^2(X^{[n]},\Z)=\Z h\oplus\Z\delta\]
and the decomposition is orthogonal with respect to the Beauville--Bogomolov form.  Furthermore, we have $(h,h)=2d$ and $(\delta,\delta)=-2$.  $h$ is represented by the divisor of subschemes one of whose points is supported on a fixed hyperplane section of $X$, and $2\delta$ is the divisor of nonreduced subschemes.  Note that $h$ always generates an extremal ray of the nef (and movable) cone as it induces the Hilbert--Chow morphism $X^{[2]}\into X^{(2)}$ contracting the diagonal.

\begin{example}\label{egg}
\renewcommand{\theenumi}{\roman{enumi}}
\begin{enumerate}
\item For $d=1$, $X$ is a degree two cover $X\into\P^2$ branched over a sextic, and hyperplane sections of $X$ are genus $2$ curves mapping to a line in $\P^2$ via the unique hyperelliptic cover.  The Hilbert scheme $X^{[2]}$ has a Lagrangian plane, the closure of the set of reduced fibers of the map $X\into \P^2$.  The degree $2$ compactified Jacobian $\overline\Pic^2(\mathcal{C})$ of the universal hyperplane section $\mathcal{C}$ also has a Lagrangian plane, the section $\P\subset \overline\Pic^2(\mathcal{C})$ given by restricting the polarization $\O(h)$.  In fact, $\overline\Pic^2(\mathcal{C})$ is the Mukai flop of $X^{[2]}$, and the flop is resolved by the relative Hilbert scheme of two points $\Hilb^2(\mathcal{C})$.  One can show that the movable cone of $X^{[2]}$ is $\langle h,h-\delta\rangle$, which is decomposed into the nef cone $\langle h, 3h-2\delta\rangle$ of $X^{[2]}$ and the image $\langle 3h-2\delta,h-\delta\rangle$ of the nef cone of the flop $\overline{\Pic}^2(\mathcal C)$.  The isotropic divisor $h-\delta$ induces the Langrangian fibration $\overline\Pic^2(\mathcal C)\into \P^{2\vee}$, and $\overline\Pic^2(\mathcal C)$ is the only other birational model of $X^{[2]}$.  The wall between them is generated by the nef class $3h-2\delta$ contracting the Lagrangian plane, and the class of the line \[R=h-\frac{3}{2}\delta\] satisfies $R^2=-\frac{5}{2}$ and $2R\in H^2(X^{[2]},\Z)$.  
\item For $d=11$, the movable cone of $X^{[2]}$ is $\langle h, 10h-33\delta\rangle$.  There are two chambers corresponding to the two birational models:  the nef cone of $X^{[2]}$ is $\langle h, 7h-22\delta\rangle$, and that of the birational model is $\langle 7h-22\delta,10h-33\delta\rangle$.  The wall between them is once again a flop, and the contracted curve \[R_1=h-\frac{7}{2}\delta\] again has $R_1^2=-\frac{5}{2}$ and $2R_1\in H^2(X^{[2]},\Z)$.  Note however that
\[R_2=11h-\frac{73}{2}\delta\]
is in the Mori cone and also has these two properties, but the movable cone does not intersect $R_2^\perp$.
\end{enumerate}
\end{example}

Lagrangian Grassmannians are also of interest; we'll likewise say an embedded Lagrangian Grassmannian $\Gr(k,\ell)\subset M$ in a holomorphic symplectic variety is extremal if the class of the minimal rational curve in $\Gr(k,\ell)$ is extremal in the Mori cone of $M$.  The following example of Hassett and Tschinkel \cite[Remark 3.1]{extremal} shows how Lagrangian Grassmannians naturally arise.
\begin{example}\label{grasseg}Let $X\subset\P^3$ be a general quartic (in particular, one containing no lines), and $M=X^{[4]}$ the Hilbert scheme of four points on $X$.  $M$ contains a Lagrangian Grassmannian:  intersecting with any line $\ell\subset \P^3$ gives a length 4 subscheme, and there is an embedding $\Gr(2,4)\subset M$.  Let $\P=|\O_X(1)|$ and take $\mathcal{C}\subset X\times \P$ to be the universal hyperplane section.  The locus of subschemes supported on a hyperplane section contains this $\Gr(2,4)$, and is the image of a map from the relative Hilbert scheme of $\mathcal{C}$:
\begin{equation}\xymatrix{
F\ar[d]\ar[r]&\Hilb^4_\P(\mathcal{C})\ar[d]\ar[r]&X^{[4]}\\
\P\ar[r]&\overline\Pic^4_\P(\mathcal{C})&
}\label{diagram}\end{equation}
A generic line bundle of degree 4 on a hyperplane section $C$ will have a two dimensional space of sections, so $\Hilb^4_\P(\mathcal{C})$ is generically a $\P^1$-bundle over $\overline\Pic_\P^4(\mathcal{C})$.  The locus where the fiber jumps to a $\P^2$ is the section $\P\into \overline\Pic_\P^4(\mathcal{C})$ obtained by restricting $\O(1)$ to a hyperplane section, and the preimage $F$ of this in $\Hilb^4_\P(\mathcal{C})$ is the flag variety of $\P^3$ parametrizing hyperplanes and lines contained in them.  The leftmost map is one of the forgetful maps $F\into \P$, and the composition of the top arrows is the other one $F\into \Gr(2,4)\subset M$.
\end{example}
Lagrangian subvarieties are rigid in the following sense:\begin{lemma}\label{deform}Let $G\subset M$ be a Lagrangian Grassmannian in a holomorphic symplectic variety.  Then $G$ does not deform as a subscheme.  If $G\cong \P$ is a Lagrangian plane, then no curve $C\subset \P$ deforms out of $\P$.
\end{lemma}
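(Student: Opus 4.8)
The plan is to prove both rigidity statements using the defining property that a Lagrangian subvariety $G$ has $\Omega_G \cong N_{G/M}$, combined with standard deformation theory. The key observation is that infinitesimal deformations of $G$ as a subscheme are governed by $H^0(G, N_{G/M})$, so I need to show this group vanishes (or is appropriately constrained) for a Lagrangian Grassmannian.

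\medskip

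First I would recall that for a smooth subvariety $G \subset M$, the space of first-order deformations of $G$ as a subscheme is $H^0(G, N_{G/M})$, and the obstructions lie in $H^1(G, N_{G/M})$. Using the Lagrangian identification $N_{G/M} \cong \Omega_G$, this reduces the question to computing $H^0(G, \Omega_G) = H^{1,0}(G)$. For $G = \Gr(k,\ell)$ a Grassmannian, the Hodge numbers are well known: all odd cohomology vanishes and the Hodge structure is of Tate type, so in particular $H^{1,0}(\Gr(k,\ell)) = 0$. This immediately gives $H^0(G, N_{G/M}) = 0$, proving that $G$ admits no nontrivial first-order deformations, hence does not deform as a subscheme. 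The same vanishing of $H^1(G,\Omega_G) = H^{1,1}$ does \emph{not} hold, so there is no obstruction-vanishing shortcut; but since the \emph{space} of deformations is already zero-dimensional at first order, rigidity follows without needing the obstruction space.

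\medskip

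For the second statement about a curve $C \subset \P$ in a Lagrangian plane $\P \cong \P^n$, I would argue that deformations of $C$ inside $M$ that move $C$ out of $\P$ correspond to nonzero sections of $N_{C/M}$ whose image is not tangent to $\P$. The relevant tool is the normal bundle sequence
\[
0 \to N_{C/\P} \to N_{C/M} \to N_{\P/M}\big|_C \to 0.
\]
A deformation of $C$ leaving $\P$ projects to a nonzero section of $N_{\P/M}|_C \cong \Omega_\P|_C$. So it suffices to show $H^0(C, \Omega_\P|_C) = 0$. Writing $C$ as a curve of some degree $d$ in $\P^n$, I would use the restriction of the Euler sequence to $C$, namely $0 \to \Omega_\P|_C \to \O_\P(-1)^{\oplus(n+1)}|_C \to \O_C \to 0$, and observe that $\O_\P(-1)|_C \cong \O_C(-d)$ has no sections for $d > 0$, which forces $H^0(C, \Omega_\P|_C) = 0$ by the long exact sequence. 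Thus no deformation of $C$ can move it out of $\P$.

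\medskip

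\textbf{The main obstacle} I anticipate is making the second part fully rigorous for an \emph{arbitrary} curve $C \subset \P$, not just a smooth complete intersection or a line. The normal bundle $N_{C/\P}$ may be badly behaved if $C$ is singular or has components, and the cleanest argument uses $\Omega_\P|_C$ rather than $N$-bundles directly; I would need to ensure the cotangent-bundle restriction computation is valid in this generality, perhaps by working with $C$ reduced and using the Euler sequence pulled back along $C \hookrightarrow \P$, which remains exact as a sequence of sheaves since it consists of locally free sheaves on $\P$. The global-generation/vanishing input $H^0(\P^n, \O(-1)) = 0$ and its restriction is robust, so the real care is in phrasing the deformation-theoretic reduction so that "deforming out of $\P$" is correctly identified with a nonzero section of the quotient $N_{\P/M}|_C$, which is exactly where the Lagrangian condition $N_{\P/M} \cong \Omega_\P$ enters decisively.
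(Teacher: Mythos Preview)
Your argument is essentially the paper's. The first part is identical: $N_{G/M}\cong\Omega_G$ and $H^0(\Gr(k,\ell),\Omega)=0$. For the second part the paper applies $\Hom(-,\O_C)$ to the ideal-sheaf sequence
\[0\to I_{\P/M}\to I_{C/M}\to I_{C/\P}\to 0\]
rather than your normal-bundle sequence, and shows $\Hom(I_{\P/M},\O_C)=0$ via the Euler sequence; since $\Hom(I_{\P/M},\O_C)=H^0(C,N_{\P/M}|_C)=H^0(C,\Omega_\P|_C)$, this is exactly your computation. The ideal-sheaf formulation directly addresses the obstacle you flag: $\Hom(I_{C/M},\O_C)$ is the tangent space to the Hilbert scheme for \emph{any} subscheme $C$, and the exactness of the ideal-sheaf sequence is automatic, so you never have to worry about whether the normal-bundle sequence is defined or exact for singular $C$. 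Your line ``$\O_\P(-1)|_C\cong\O_C(-d)$'' is written as though $C\cong\P^1$; the correct general statement (which the paper also implicitly uses) is that $\Omega_\P|_C\hookrightarrow\O_\P(-1)^{n+1}|_C$ and the latter has no sections because $\O_\P(1)|_C$ is ample on an integral curve.
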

\begin{proof}
For the first statement, since $G$ is Lagrangian, we have $N_{G/M}\cong \Omega_{G}$ and therefore $H^0(N_{G/M})=0$.  For the second, it follows from the Euler sequence
\[0\into \Omega_\P\into \O_P(-1)^{n+1}\into \O_\P\into 0\]
together with the sequence
\[0\into I_{\P/M}\into I_{C/M}\into I_{C/\P}\into0\]
that $\Hom(I_{\P/M},\O_C)=0$, and therefore that the map $\Hom(I_{C/\P},\O_C)\into \Hom(I_{C/M},\O_C)$ is an isomorphism.
\end{proof}

\section{Bridgeland Moduli Spaces}\label{bridgeland}
Let $X$ be a \Kthree surface,
$\class{v}\in \HalgX$ a primitive Mukai vector with $\class{v}^2>0$ and $\sigma$ a generic
stability condition.  We first prove
Theorem \ref{main} for $M=M_\sigma(\class{v})$.  Note that by Verbitsky's Torelli theorem \cite{verbitsky} and Markman's results on the monodromy group for the \Kthree deformation type (see Section \ref{general}), the weight 2 Hodge structure $\tilde H(X,\Z)$ from \ref{4} together with the class $\class{v}\in \HalgX$ determines $M_\sigma(\class{v})$ up to birational equivalence.
\begin{definition}  A \emph{pointed period} $(\tilde \Lambda,\class{v})$ is a (pure) weight 2 polarized Hodge structure on the Mukai lattice $\tilde\Lambda$ with Hodge number $h^{2,0}=1$ together with a primitive algebraic class $\class{v}\in\tilde\Lambda_\mathrm{alg}$.  A \emph{pointed sublattice} is a saturated sublattice $\mathcal{H}\subset\tilde\Lambda_\mathrm{alg}$ containing $\class{v}$.  We will adopt the convention that when a pointed sublattice is specified by its intersection form, the distinguished class will be the first basis vector.
\end{definition}

Roughly speaking, if there is a partition
$\class{v}=\class{a}+\class{b}$ with $\class{a},\class{b}\in \HalgX$, then
$\sigma$-stable objects with Mukai vector $\class{v}$ can be built
as extensions of objects $A,B$ with Mukai vectors $\class{a}$ and
$\class{b}$, and the projectivized extension group $\P=\P\Ext^1(A,B)^\vee$
will map into $M_\sigma(\class{v})$.  The geometry of $\P$
depends on the particulars of the pointed sublattice $\mathcal{H}$ generated by $\class{a}$ and $\class{b}$.  Recall that
$\class{a}\in\HalgX$ is \emph{spherical} if $\class{a}^2=-2$.  An object $A\in D^b(X)$ is \emph{rigid} if $\Ext^1(A,A)=0$, and \emph{spherical} if it is rigid and $\Hom(A,A)=\C\id$.

\begin{definition}\label{ptype}  A pointed sublattice $\mathcal{H}\subset\HalgX$ is a $\P$ \emph{type} sublattice if:
\begin{itemize}
\item[(i)] There is a spherical class $\class{s}\in \mathcal{H}$ such that $|(\class{s},\class{v})|=\frac{\class{v}^2}{2}$.
\item[(ii)] There is no spherical class $\class{s'}\in\mathcal{H}$ with $|(\class{s'},\class{v})|<\frac{\class{v}^2}{2}$.
\end{itemize}
Further we say a $\P$ type sublattice $\mathcal{H}$ is \emph{extremal} with respect to a (generic) stability condition $\sigma$ if $\theta(\mathcal{H}^\perp)$ is a wall of the nef cone of $M_\sigma(\class{v})$.
\end{definition}
Note that $\mathcal{H}$ being extremal with respect to some stability condition is equivalent to $\theta(\mathcal{H}^\perp)$ intersecting the movable cone of each $M_{\sigma}(\class{v})$.
A pointed sub-lattice of the form
\[\begin{pmatrix}
\class{v}^2&\frac{\class{v}^2}{2}\\\frac{\class{v}^2}{2}&-2\\
\end{pmatrix}\]
is automatically of $\P$ type, though not every one is of this form.  For the following lemma, we say that $\class{v}$ is minimal in $\mathcal{H}$ if there is no effective spherical class $\class{s}\in\mathcal{H}$ with $(\class{s},\class{v})<0$.
\begin{lemma}\label{little}Let $\mathcal{H}$ be a $\P$ type sublattice, and let $\mathcal{W}$ be the associated potential wall with generic $\sigma_0\in \mathcal{W}$.  If $\class{v}$ is minimal in $\mathcal{H}$, then there are two $\sigma_0$-stable spherical objects $S,T$ with Mukai vectors $\class{s},\class{t}$ such that $\class{v}=\class{s}+\class{t}$. 
\end{lemma}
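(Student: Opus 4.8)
The plan is to first produce the numerical splitting $\class{v}=\class{s}+\class{t}$ and then upgrade it to a splitting into stable objects. Choose the sign of the spherical class from condition (i) so that $(\class{s},\class{v})=\frac{\class{v}^2}{2}$; the opposite sign is excluded because, by minimality, an effective spherical class pairs nonnegatively with $\class{v}$. Set $\class{t}=\class{v}-\class{s}$. A one-line computation gives $\class{t}^2=\class{v}^2-2(\class{s},\class{v})+\class{s}^2=\class{v}^2-\class{v}^2-2=-2$, so $\class{t}$ is spherical as well, and symmetrically $(\class{t},\class{v})=\frac{\class{v}^2}{2}$. By Theorem \ref{todaexist} each of the primitive spherical classes $\class{s},\class{t}$ carries a unique rigid stable object for a generic stability condition, so each is effective; minimality then forces the effective representative to be the one pairing positively with $\class{v}$, namely $\class{s}$ and $\class{t}$ themselves. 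It remains only to see that their stable objects stay stable at $\sigma_0$.

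Suppose, for contradiction, that the $\sigma_0$-semistable object $S$ of class $\class{s}$ is strictly semistable. Its Jordan--H\"older factors are $\sigma_0$-stable of a common phase $\phi$; write the distinct factors as $W_1,\dots,W_r$ with multiplicities $n_1,\dots,n_r\geq 1$ and classes $\class{w}_1,\dots,\class{w}_r$, so that $\class{s}=\sum_l n_l\class{w}_l$. Because $\sigma_0$ is generic in $\mathcal{W}$, every class of phase $\phi$ is real-proportional to $Z(\class{v})$ and hence lies in $\mathcal{H}$, so each $\class{w}_l\in\mathcal{H}$; each is moreover effective, whence $\class{w}_l^2\geq -2$. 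For distinct stable factors of equal phase one has $\Hom(W_l,W_{l'})=0$ and, by Serre duality on the \Kthree surface, $\Ext^2(W_l,W_{l'})=0$, so $(\class{w}_l,\class{w}_{l'})=\ext^1(W_l,W_{l'})\geq 0$ by Grothendieck--Riemann--Roch. Expanding $\class{s}^2=\sum_l n_l^2\class{w}_l^2+\sum_{l<l'}2n_ln_{l'}(\class{w}_l,\class{w}_{l'})=-2$ with all cross terms nonnegative forces $\sum_l n_l^2\class{w}_l^2\leq -2$; since the Mukai lattice is even and each $\class{w}_l^2\geq -2$, at least one factor, say $\class{w}_1$, must be spherical.

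The hypotheses now rule out the decomposition. Each effective factor pairs nonnegatively with $\class{v}$: a factor of nonnegative square lies in the closed forward positive cone together with $\class{v}$, so $(\class{w}_l,\class{v})>0$ by the hyperbolic signature, while a spherical factor is effective, hence $(\class{w}_l,\class{v})\geq 0$ by minimality and in fact $\geq\frac{\class{v}^2}{2}$ by condition (ii). Consequently
\[\tfrac{\class{v}^2}{2}=(\class{s},\class{v})=\sum_l n_l(\class{w}_l,\class{v})\geq n_1(\class{w}_1,\class{v})\geq \tfrac{\class{v}^2}{2},\]
so equality holds throughout: $n_1=1$ and every remaining term vanishes. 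But a nonspherical effective factor has strictly positive pairing and a spherical one has pairing $\geq\frac{\class{v}^2}{2}>0$, so no further factors can occur; thus $r=1$ and $S=W_1$ is $\sigma_0$-stable, a contradiction. The identical argument applied to $\class{t}$ yields a $\sigma_0$-stable object $T$ of class $\class{t}$, and $\class{v}=\class{s}+\class{t}$ by construction. I expect the third step to be the main obstacle: one must control the pairing $(\class{w}_l,\class{v})$ for every possible factor type---isotropic, positive, and spherical---at once, and correctly invoke that for generic $\sigma_0\in\mathcal{W}$ the classes of all equal-phase factors are confined to $\mathcal{H}$. Conditions (ii) and minimality are precisely what is needed to close the displayed inequality.
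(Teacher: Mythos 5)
Your proof is correct and reaches the same numerical squeeze as the paper, but it gets there by a genuinely different route in the key step. The paper simply cites \cite[Proposition 6.3]{BM}: for generic $\sigma_0$ on the potential wall there are exactly two $\sigma_0$-stable spherical objects with classes $\class{s}_0,\class{t}_0$ in $\mathcal{H}$, and every Jordan--H\"older factor of an object of class $\class{s}$ is one of them; writing $\class{s}=x\class{s}_0+y\class{t}_0$ with $x,y\geq 0$, condition (ii) and minimality force $\class{s}\in\{\class{s}_0,\class{t}_0\}$, and likewise for $\class{t}$. You instead re-derive the part of that classification you actually need: you take an arbitrary Jordan--H\"older decomposition of the semistable object of class $\class{s}$, confine the factor classes to $\mathcal{H}$ by genericity of $\sigma_0$, show the cross-pairings are nonnegative via $\Hom=\Ext^2=0$ and Riemann--Roch, use evenness of the Mukai lattice to extract a spherical factor, and then close with the same inequality $\frac{\class{v}^2}{2}=\sum_l n_l(\class{w}_l,\class{v})\geq n_1(\class{w}_1,\class{v})\geq\frac{\class{v}^2}{2}$. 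What the paper's citation buys is precisely the case analysis you have to do by hand: that factors of nonnegative square cannot appear. Your treatment of that case---``a factor of nonnegative square lies in the closed forward positive cone together with $\class{v}$, so $(\class{w}_l,\class{v})>0$''---is true but is the one assertion you should anchor to a reference (it follows from the structure of the effective cone of a potential wall in \cite[\S 5]{BM}, ultimately the support property: $Z$ cannot vanish on a nonzero class of nonnegative square, so the kernel of $Z|_{\mathcal{H}\otimes\R}$ is negative definite and effective classes of nonnegative square land in the component of the positive cone containing $\class{v}$); ``by the hyperbolic signature'' alone does not rule out the opposite component. With that reference supplied, your argument is complete and has the virtue of being self-contained where the paper black-boxes the classification of stable factors on the wall.
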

\begin{proof} By definition, there is a spherical class $\class{s}\in\mathcal{H}$ with $(\class{s},\class{v})=\frac{\class{v}^2}{2}$, and it is effective. Note that $\class{t}=\class{v}-\class{s}$ satisfies
\[\class{t}^2=\class{v}^2-2(\class{s},\class{v})+(-2)=-2\]
so $\class{t}$ is spherical, and moroever $(\class{t},\class{v})=\frac{\class{v}^2}{2}$ as well. 
By \cite[Proposition 6.3]{BM}, there are exactly two $\sigma_0$-stable objects $S,T$ with Mukai vectors $\class{s}_0,\class{t}_0$, and each Jordan-H\"older factor of an object representing $\class{s}$ is $\class{s}_0$ or $\class{t}_0$.  Thus, $\class{s}=x\class{s}_0+y\class{t}_0$ with $x,y\geq 0$, but since
\[(\class{s},\class{v})=x(\class{s}_0,\class{v})+y(\class{t}_0,\class{v})=\frac{\class{v}^2}{2}\]
we must have either $\class{s}=\class{s}_0$ or $\class{s}=\class{t}_0$ by condition $(\mathrm{ii})$.  Similarly, $\class{t}=\class{t}_0$ or $\class{t}=\class{s}_0$.    
\end{proof}
The utility of Definition \ref{ptype} is hinted at by the following:  
\begin{lemma}\label{reverse}If $\mathcal{H}\subset\HalgX$ is an extremal $\P$ type sublattice, then $M_\sigma(\class{v})$ contains an extremal Lagrangian plane $\P\subset M_\sigma(\class{v})$ for $\sigma$ generic on either side of the wall associated to $\mathcal{H}$.
\end{lemma}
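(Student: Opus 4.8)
The plan is to exhibit the Lagrangian plane as a family of extensions of the two spherical objects produced by Lemma \ref{little}. Since spherical twists are autoequivalences of $D^b(X)$ that carry moduli spaces isomorphically to moduli spaces and walls to walls, I would first reduce to the case that $\class{v}$ is minimal in $\mathcal{H}$: if some effective spherical class pairs negatively with $\class{v}$, twisting by the associated spherical object replaces $\class{v}$ by its reflection and strictly decreases this defect, so after finitely many twists $\class{v}$ becomes minimal, and a Lagrangian plane for the twisted data transports back to one for the original. Lemma \ref{little} then supplies $\sigma_0$-stable spherical objects $S,T$ with $\class{v}(S)=\class{s}$, $\class{v}(T)=\class{t}$, $\class{v}=\class{s}+\class{t}$, and $(\class{s},\class{v})=(\class{t},\class{v})=\frac{\class{v}^2}{2}$, where $\sigma_0\in\mathcal{W}$ is generic on the potential wall.

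I would next pin down the extension space. As $S$ and $T$ are distinct $\sigma_0$-stable objects of the same phase, $\hom(S,T)=\hom(T,S)=0$; combined with $\chi(S,T)=-(\class{s},\class{t})=-\left(\frac{\class{v}^2}{2}+2\right)=-(n+1)$ and the Serre duality identity $\ext^2(S,T)=\hom(T,S)=0$, this gives $\ext^1(S,T)=n+1$, and symmetrically $\ext^1(T,S)=n+1$. Hence $\P:=\P\Ext^1(S,T)^\vee\cong\P^n$ has dimension exactly $n=\frac{1}{2}\dim M_\sigma(\class{v})$, and because $S,T$ are simple with no homomorphisms between them, its points parametrize precisely the isomorphism classes of nonsplit extensions $0\into T\into E\into S\into 0$.

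Fixing $\sigma$ generic on the side of the wall where $\phi(T)<\phi(S)$ (the opposite side being handled identically with the roles of $S$ and $T$ exchanged, i.e. using $\Ext^1(T,S)$), I would check that each such nonsplit $E$ is $\sigma$-stable: its $\sigma_0$-Jordan--H\"older factors are $S$ and $T$, and with $\phi(T)<\phi(E)<\phi(S)$ any destabilizing subobject would split off a copy of $S$, contradicting nonsplitness. A universal extension over $\P\times X$ is then a flat family of $\sigma$-stable objects of Mukai vector $\class{v}$, inducing a morphism $\P\into M_\sigma(\class{v})$ that is injective on points by the previous paragraph. To see it is a closed immersion I would apply $\Hom(S,-)$ to the triangle $T\into E\into S$ to get $\Hom(S,E)=0$ and $\Ext^1(S,E)\cong\Ext^1(S,T)/\C e\cong T_{[e]}\P$ (using $\Ext^1(S,S)=0$), and then apply $\Hom(-,E)$ to the same triangle; since $\Hom(E,E)\into\Hom(T,E)$ is an isomorphism of one-dimensional spaces, the next map $\Ext^1(S,E)\into\Ext^1(E,E)=T_{[E]}M_\sigma(\class{v})$ is injective, identifying $T_{[e]}\P$ with its image. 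Properness of $\P$ then makes the map a closed embedding.

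Lagrangianness is then automatic: the restriction of the holomorphic symplectic form to $\P\cong\P^n$ is a global section of $\Omega^2_{\P^n}$ and hence vanishes, so $T_{[E]}\P$ is isotropic at every point and, being half-dimensional, is Lagrangian. For extremality, the hypothesis that $\mathcal{H}$ is extremal means $\theta(\mathcal{H}^\perp)$ is a codimension-one face of the nef cone of $M_\sigma(\class{v})$; the class $\ell_{\sigma_0}$ lies on this face, and by \ref{8} its contraction collapses exactly the $S$-equivalence classes, so it contracts $\P$ (whose objects all share the $\sigma_0$-factors $S,T$) and the line $R\subset\P$ spans the dual extremal ray of the Mori cone. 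I expect the main obstacle to be the closed-immersion step: verifying that the universal family is genuinely flat with $\sigma$-stable, pairwise non-isomorphic fibers and that the tangent map $T_{[e]}\P\into\Ext^1(E,E)$ is injective, since this is where the extension-theoretic bookkeeping and the genericity of $\sigma$ are really needed.
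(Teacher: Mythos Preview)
Your proof is correct and follows essentially the same approach as the paper: both construct the Lagrangian plane as the projectivized extension space $\P\Ext^1$ between the two $\sigma_0$-stable spherical objects from Lemma \ref{little}, verify $\sigma$-stability of the universal extension on one side of the wall (the paper via Lemma \ref{simple}, you by the equivalent nonsplitness argument), and handle the non-minimal case via spherical twists. You add explicit checks of the closed-immersion and Lagrangian conditions that the paper leaves implicit by identifying $\P$ with the Jordan--H\"older stratum $M_P$ directly, but the core argument is identical.
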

%\begin{proof}
%Choose $\sigma$ to be generic with respect to $\class{a}$ and
%$\class{b}$ as well.  Note that
%\[(\class{a},\class{b})=\frac{1}{2}(\class{v}^2-\class{a}^2-\class{b}^2)=n+1\]
%By Theorem \ref{todaexist}, there will
%exist $\sigma$-stable objects $A,B$ with $\class{a}=\class{v}(A)$ and
%$\class{b}=\class{v}(B)$.  By shifting and rotating $\sigma$, we can
%assume that $A$ and $B$ are in the heart $\mathcal{P}((0,1])$.
%$\class{a},\class{v}$ necessarily span a two dimensional lattice in
%$\HalgX$, so $Z(\class{a})$ and $Z(\class{v})$ are not
%colinear (generically).  Assuming $\phi(A)<\phi(B)$, then
%$\Ext^k(B,A)=0$ for $k\leq 0$ and $k>2$, so $\ext^1(B,A)\geq
%(\class{a},\class{b})=n+1$.  Letting $\P=\P\Ext^1(B,A)^\vee$, the universal extension 
%\[p^*A(1)\into E\into p^*B\into p^*A(1)[1]\]
%defines a flat family $E\in D^b(\P\times
%X)$ with $\class{v}(E_x)=\class{v}$ for all $x\in\P$.  Restricting to $x\in\P$ and
%applying $\Hom(B,\cdot)$ to the above sequence, we see that
%$\Hom(B,E_x)=0$, and therefore by Lemma \ref{extension} $E_x$ is
%$\sigma$-stable.  The resulting map $\P\into
%M_\sigma(\class{v})$ is injective on points.  In fact, the pullback of
%the holomorphic symplectic form to $\P$ must vanish, and $\P$ is thereby
%forced to be smoothly embedded and half-dimensional
%(and in particular $\Ext^2(B,A)=0$).  The case $\phi(A)>\phi(B)$ is similar.
%\end{proof}  
\begin{proof} 
First assuming that $\class{v}$ is minimal in $\mathcal{H}$, by Lemma \ref{little} there exist $\sigma_0$-stable objects $S,T\in \mathcal{P}_0(1)$ of classes $\class{s},\class{t}$ (respectively) for a generic $\sigma_0\in\mathcal{W}$, and $\class{v}=\class{s}+\class{t}$.  We therefore have that
$\Ext^k(T,S)=0$ for $k< 0$ and $k> 2$ as $S,T$ are both in the heart of $\sigma_0$, and further $\Hom(T,S)=\Ext^2(T,S)=0$ by stability, so $\ext^1(T,S)=
(\class{s},\class{t})=n+1$.  If $\sigma$ is a generic stability condition on one side of $\mathcal{W}$, assume $\phi(S)<\phi(T)$ and let $\P=\P\Ext^1(T,S)^\vee$.  Denoting by $p:\P\times X\into \P$ the first projection,
\begin{equation}p^*S(1)\into E\into p^*T\into p^*S(1)[1]\label{sphext}\end{equation}
defines a flat family $E\in D^b(\P\times
X)$ with $\class{v}(E_x)=\class{v}$ for all $x\in\P$.  Restricting to $x\in\P$ and
applying $\Hom(T,\cdot)$ to the above sequence, we see that
$\Hom(T,E_x)=0$, and therefore by the following simple lemma, $E_x$ is
$\sigma$-stable.  
\begin{lemma}[Lemma 6.9 of \cite{BM}]\label{simple}Let $A,B$ be simple objects in an abelian category, and 
\[0\into A^x\into E\into B^{y}\into 0\]
any extension with the property that either:  (i) $x=1$ and $\Hom(B,E)=0$; or (ii) $y=1$ and $\Hom(E,A)=0$.
Then in case (i) every proper quotient of $E$ is isomorphic to $B^z$ for some $z$; in case (ii) every proper subobject of $E$ is isomorphic to $A^z$ for some $z$.
\end{lemma}

Further, it is easy to see that any $\sigma$-stable object $E$ with Jordan-H\"older partition $P=[\class{v}=\class{s}+\class{t}]$ with respect to $\sigma_0$ is of the form \eqref{sphext}, so we have an isomorphism between the stratum $M_P\subset M_\sigma(\class{v})$ and $\P$.  The case $\phi(S)>\phi(T)$ likewise produces a Lagrangian plane on the other side of the wall.

Finally, if $\class{v}$ is not minimal in $\mathcal{H}$, then there is a minimal $\class{v}_0\in\mathcal{H}$ such that $\class{v}$ is obtained from $\class{v}_0$ by successive spherical reflections.  If we let $\ST:D^b(X)\into D^b(X)$ be the composition of the corresponding sequence of spherical twists by $\sigma$-stable spherical objects, for $\sigma$ on one side of the wall, then by the same argument as \cite[Proposition 6.8]{BM}, $\ST$ applied to the family in \eqref{sphext} will be stable on that side of the wall. 

\end{proof}
We can compute the class of the line in the Lagrangian plane of Lemma \ref{reverse}.  Recall from \eqref{nefcone} that for any curve $C\subset M_\sigma(\class{v})$,
\[\theta(\class{w}).C=(\class{w},\class{v}(\Phi_E(\O_C)))\]
Let $\P^1\subset \P$ be a line and
$q:\P^1\times X\into X$ the projection onto the second factor.  The
class $R=[\P^1]\in H_2(M_\sigma(\class{v}))$ is determined by
intersecting with all divisors:  for any $\class{w}\in \class{v}^\perp$,
\[\theta(\class{w}).R=(\class{w},\class{v}(q_*E|_{\P^1\times
  X}))=(\class{w},2\class{s}+\class{t})=(\class{w},\class{s})\]
and thus $R=\theta^\vee(\class{s})$ in the case $\phi(S)<\phi(T)$.  If
$\phi(S)>\phi(T)$ we obtain $R=-\theta^\vee(\class{s})$.

The Lagangians planes constructed as in Lemma \ref{reverse} are clearly extremal.  Indeed, by the classification in \cite[Theorem
5.7]{BM}, passing through the potential wall associated to the hyperbolic
lattice $\mathcal{H}$ will flop the
projective space and change the sign of the class of the line.  

The rest of this section will be devoted to showing that this is in fact the only way such planes arise: 

% For convenience, given $\class{v}\in\HalgX$, we will
%say that $\HalgX$ admits a hyperbolic
%  lattice of \emph{$\P$ type} (with respect to $\class{v}$) if there is a saturated sublattice
%  $\class{v}\in\mathcal{H}\subset\HalgX$ of signature (1,1) such that $\class{v}$
%  has an extremal spherical decomposition
%  $\class{v}=\class{a}+\class{b}$ for $\class{a},\class{b}\in
%  \mathcal{H}$.  Thus, $M_\sigma(\class{v})$ contains a contractible Lagrangian
%  plane whenever $\HalgX$ admits a hyperbolic lattice of $\P$ type,
%  and i

\begin{proposition}\label{moduli}

Let $\class{v}\in \HalgX$
  be primitive with $\class{v}^2>0$ and $\sigma$ a generic stability
  condition with respect to $\class{v}$.  $M_\sigma(\class{v})$
  contains an extremal Lagrangian plane $\P\subset M_\sigma(\class{v})$ if and only if $\HalgX$ admits an extremal $\P$ type sublattice $\mathcal{H}$ with respect to $\sigma$.  Further, in this case the class of the line in $\P$ is $\pm\theta^\vee(\class{s})$, for $\class{s}\in\mathcal{H}$ a spherical class with $(\class{s},\class{v})=\frac{\class{v}^2}{2}$.
\end{proposition}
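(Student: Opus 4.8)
The statement of Proposition~\ref{moduli} is an ``if and only if,'' and the ``if'' direction together with the computation of the line class is exactly the content of Lemma~\ref{reverse} and the subsequent intersection-number computation $R=\theta^\vee(\class{s})$ already in hand. So the real work is the ``only if'' direction: assuming $M_\sigma(\class{v})$ contains an extremal Lagrangian plane $\P$, I must produce an extremal $\P$~type sublattice $\mathcal{H}\subset\HalgX$. The plan is to let $R\in H_2(M_\sigma(\class{v}),\Z)$ be the class of the line in $\P$, let $\mathcal{W}$ be the wall of the nef cone dual to the extremal ray $R$ generates, and recover $\mathcal{H}$ as the rank~$2$ hyperbolic sublattice attached to $\mathcal{W}$ by \ref{6}. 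The face of the nef cone that $R$ lies dual to has the form $\theta(\mathcal{H}^\perp)$ by \ref{nefcone}, so $\mathcal{H}$ is determined; what remains is to verify conditions~(i) and~(ii) of Definition~\ref{ptype}, i.e.\ that $\mathcal{H}$ is of $\P$~type, and that it is extremal (which is automatic since $\theta(\mathcal{H}^\perp)$ is by construction a wall of the nef cone).

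\textbf{Verifying the $\P$ type conditions.}
To get condition~(i), I would run the stratification of \ref{10}. Crossing the wall $\mathcal{W}$, the plane $\P$ appears as (the closure of) a Jordan--H\"older stratum $M_P$ for some partition $P=[\class{v}=\sum\class{a}_i]$ of $\class{v}$ into classes effective on $\mathcal{W}$. Because $\P\cong\P^n$ is a single fiber of the contraction collapsing to a point, the combinatorics of the stratum must be as rigid as possible: I expect the partition to be a two-part partition $\class{v}=\class{s}+\class{t}$ into spherical classes, with $\P=\P\Ext^1(T,S)^\vee$ exactly as in the proof of Lemma~\ref{reverse}. The dimension constraint $\dim\P=n$ forces $\ext^1(T,S)=(\class{s},\class{t})=n+1$, which combined with $\class{s}^2=\class{t}^2=-2$ and $\class{v}^2=2n-2$ gives $(\class{s},\class{v})=\frac{\class{v}^2}{2}$, yielding condition~(i). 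For condition~(ii), a spherical class $\class{s}'\in\mathcal{H}$ with $|(\class{s}',\class{v})|<\frac{\class{v}^2}{2}$ would by the Bayer--Macr\`i classification (\ref{nefcone}, using the bound $|(\class{a},\class{v})|\le\frac{\class{v}^2}{2}$ and the description of movable-cone faces) correspond to a different, less extreme wall or to a divisorial rather than small contraction, contradicting either the extremality of $R$ or the fact that the contraction of $\P$ is small (a flop, by Example~\ref{examples}(i) and the local structure of Mukai flops). So the existence of a smaller spherical class would force the wall to be interior to the movable cone or of divisorial type, and either is incompatible with $\P$ being a flopping Lagrangian plane.

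\textbf{The main obstacle.}
The hard part will be ruling out more complicated Jordan--H\"older strata---showing that the Lagrangian plane cannot arise from a partition with more than two parts, or from a two-part partition into non-spherical classes. This is where the geometry of $\P^n$ as opposed to an arbitrary projective bundle is essential. I would argue that any stratum whose generic fiber of the contraction is $\P^n$ and which collapses to a \emph{point} (so that the full exceptional locus is just $\P$, or at least that $\P$ is a connected component of a fiber) must be maximal in the refinement order of \ref{10}, forcing a two-block partition; and that if either block were non-spherical, say $\class{a}_i^2\ge 0$, then by Theorem~\ref{todaexist} the corresponding moduli factor $M_{\sigma_0}(\class{a}_i)$ would be positive-dimensional, so the stratum would fiber nontrivially over a positive-dimensional base and could not be a single $\P^n$ contracting to a point. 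Pinning down this rigidity---translating ``$\P$ is a Lagrangian $\P^n$ contracted to a point'' into ``$P$ is a two-block spherical partition''---cleanly, using the classification of \cite[\S5]{BM} of the local geometry of wall-crossings, is the crux of the argument; the remaining lattice-theoretic bookkeeping is then routine.
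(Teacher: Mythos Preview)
Your outline matches the paper's: the ``if'' direction and the line-class computation are Lemma~\ref{reverse} and the calculation following it, and for ``only if'' you correctly locate $\mathcal{H}$ as the hyperbolic sublattice attached to the wall $\mathcal{W}$ dual to $R$, then analyze Jordan--H\"older strata on $\mathcal{W}$. Your rigidity argument for the JH factors is essentially the paper's, though the paper makes the contradiction precise via Lemma~\ref{deform}: if some $A_i$ deformed, a contracted curve in $\P$ would deform out of $\P$.

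Two places where you diverge or are incomplete. First, your reason for reducing to a two-block partition (``maximal in the refinement order'') is vague and is not the paper's mechanism. The paper instead invokes \cite[Proposition~6.3]{BM}: there are exactly two $\sigma_0$-stable spherical objects $S,T\in\mathcal{P}_0(1)$, so once all the JH factors are spherical the partition is automatically $P=[\class{v}=x\class{s}+y\class{t}]$ for some $x,y\ge 1$. The step you flag as the crux but do not execute is then an explicit dimension count: writing any $E\in M_P$ as an extension $S\otimes U^\vee\to E\to T\otimes V$ with $\dim U=x$, $\dim V=y$, and counting moduli of the induced bilinear map $U\otimes V\to\Ext^1(T,S)$ modulo $\PGL(U)\times\PGL(V)$, one obtains
\[\dim M_P\le xy\bigl((\class{s},\class{t})-xy\bigr)=n+(x^2-1)(1-y^2),\]
so half-dimensionality forces $x=1$ or $y=1$ (in which case $M_P$ is a Grassmannian $\Gr(x,(\class{s},\class{t}))$ via Lemma~\ref{simple}), and $\P=M_P$ then forces $x=y=1$.

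Second, your argument for condition~(ii) of Definition~\ref{ptype} is off. A spherical $\class{s}'\in\mathcal{H}$ with $|(\class{s}',\class{v})|<\frac{\class{v}^2}{2}$ does not correspond to a \emph{different} wall---it lies in the same $\mathcal{H}$, hence is attached to $\mathcal{W}$---and divisorial walls in \ref{nefcone} require $(\class{s}',\class{v})=0$, not merely a small pairing, so your dichotomy does not cover the intermediate range. The paper does not argue (ii) separately; once $\class{v}=\class{s}+\class{t}$ with $\class{s},\class{t}$ the classes of the only two $\sigma_0$-stable spherical objects, both satisfying $(\,\cdot\,,\class{v})=\frac{\class{v}^2}{2}$, condition~(ii) is immediate since any effective spherical class in $\mathcal{H}$ has JH factors among $S,T$ and hence pairs with $\class{v}$ to at least $\frac{\class{v}^2}{2}$.
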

\begin{proof} 
The reverse direction and the computation of the curve class follow from Lemma \ref{reverse} and the ensuing
discussion, so we just need to demonstrate the necessity of the
lattice condition.

Suppose for some generic stability condition $\sigma$
there is an extremal Lagrangian plane $\P\subset M_\sigma(\class{v})$, and let
$\pi:M_\sigma(\class{v})\into M$ be the contraction.  $\pi$ is realized by crossing some wall
$\mathcal{W}\subset \Stab(M)$; let $\mathcal{H}\subset \HalgX$ be the
associated hyperbolic lattice, with effective cone
$\mathcal{C}\subset \mathcal{H}\otimes\R$.  Then $\pi$ contracts curves parametrizing objects that are $S$-equivalent with respect to $\sigma_0$, and since $\P$ is contracted to a point, a generic point $x\in\P$ has fixed Jordan-H\"older factors $A_i$ with respect to $\sigma_0$.  Let $P=[\class{v}=\sum \class{a}_i]$ be the corresponding partition, where $\class{a}_i=\class{v}(A_i)$, and let $M_P\subset M_\sigma(\class{v})$ be the locally closed subvariety of points with the same Jordan-H\"older decomposition.  
\begin{lemma}  The $A_i$ are all rigid.  In particular, they are spherical.
\end{lemma}
\begin{proof}Obviously if the $A_i$ deformed, then because the dimensions of the extension groups between the $A_i$ locally remain constant (as the $A_i$ locally remain stable with respect to $\sigma_0$), then a curve $C$ contracted by $\pi$ would deform outside of $\P$, contradicting Lemma \ref{deform}.  Thus, the $A_i$ are simple in $\mathcal{P}_0(1)$ and rigid, and therefore spherical.
\end{proof}
%The $A_i$ are $\sigma_0$-stable and therefore simple in $\mathcal{P}_0(1)$, so the extension groups between them have constant dimension as the $A_i$ vary in moduli.  Thus, the map $M_P\into \prod_i M_{\sigma_0}(\class{a}_i) $ is smooth in a neighborhood of $(A_i)$, and because $\P\subset M_\sigma(\class{v})$ does not deform, it follows that each $\class{a}_i$ is spherical.  
As in the proof of Lemma \ref{reverse}, there are exactly two spherical objects $S,T\in\mathcal{P}_0(1)$, so in fact the partition $P$ must be of the form $\class{v}=x\class{s}+y\class{t}$, where $\class{s}=\class{v}(S)$ and $\class{t}=\class{v}(T)$.  Suppose $\phi(S)<\phi(T)$ for $\sigma$.  It follows that an object $E$ associated to a point of $M_P$ is an extension of the form
\begin{equation}S\otimes U^\vee\into E\into T\otimes V\into S\otimes U^\vee[1]\label{ext}\end{equation}
where $U,V$ are vector spaces of dimensions $x,y$, respectively.  Indeed, since neither $S$ nor $T$ admits self-extensions, \eqref{ext} is just the Harder--Narasimhan filtration on the other side of the wall\footnote{Therefore, we can retrospectively realize that $U^\vee=\Hom(S,E)$ and $V^\vee=\Hom(E,T)$.}.  Such an extension is stable only if the rightmost map $U\otimes V\into\Ext^1(T,S)$ from \eqref{ext}
satisfies
\begin{enumerate}
\item[(i)] $U\into \Hom(V,\Ext^1(T,S))$ is injective, and
\item[(ii)] $V\into \Hom(U,\Ext^1(T,S))$ is injective
\end{enumerate}
and therefore we can identify $M_P$ with a subscheme of the space of bidegree $(1,1)$ maps $\P U\times \P V\into \P\Ext^1(B,A)$, up to the action of $\PGL(U)\times \PGL(V)$.  But $\P\subset M_P$ and 
\[\dim M_P\leq xy\left((\class{s},\class{t})-xy\right)=\frac{\class{v}^2}{2}+x^2+y^2-x^2y^2
=n+(x^2-1)(y^2-1)\]
In order for this to be half-dimensional, we need either $x=1$ or $y=1$.  In this case, by Lemma \ref{simple} we have $M_P=\Gr(x,(\class{s},\class{t}))$ (for $y=1$, and likewise if $x=1$), and in order for $\P=M_P$, we need $x=y=1$.

%FINE, all stable
%Not totally semistable.  appendix\footnote{Jordan'H\"older only defined up to}
\end{proof}
This provides an easy verification that Lagrangian planes contract to isolated singularities, since the decomposition $\class{v}=\class{s}+\class{t}$ cannot be refined:
\begin{corollary}  If $\P\subset M_\sigma(\class{v})$ is an extremal Lagrangian plane, then $\P$ is a connected component of the exceptional locus of the associated extremal contraction $\pi:M_\sigma(\class{v})\into M$.
\end{corollary}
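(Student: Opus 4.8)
The plan is to deduce the statement directly from the stratification of the exceptional locus recorded in \ref{10}, combined with the structural description in Proposition \ref{moduli}. That Proposition identifies the extremal Lagrangian plane as the stratum $\P = M_P$ attached to the partition $P = [\class{v} = \class{s} + \class{t}]$, where $\class{s}$ and $\class{t}$ are spherical and $S, T$ are the unique $\sigma_0$-stable objects of those classes for $\sigma_0$ generic on the wall $\mathcal{W}$ giving rise to $\pi$. Writing the exceptional locus as the union of the strata $M_{P'}$ over all nontrivial partitions $P'$, I would prove that $\P$ is at once closed and open in this locus; since $\P \cong \P^n$ is connected, this exhibits it as a single connected component.

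For closedness, recall from \ref{10} that $\overline{M_P}$ is the union of the strata $M_{P'}$ with $P'$ a refinement of $P$, so the point is that $P$ has no proper refinement with a nonempty stratum. Any such refinement would split $\class{s}$ or $\class{t}$ into further Jordan--H\"older classes; but the only $\sigma_0$-stable objects available are the two simple objects $S, T$ (as in the proofs of Lemma \ref{reverse} and Proposition \ref{moduli}), so $\class{s}$ and $\class{t}$ are already classes of stable factors and $P$ is maximally refined. Hence $\overline{M_P} = M_P$.

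For openness, I would run the refinement criterion in the opposite direction: for $x \in M_P$, the strata $M_{P''}$ whose closures contain $x$ are precisely those for which $P$ refines $P''$, i.e. the coarsenings of $P$. The only coarsening of $[\class{v} = \class{s} + \class{t}]$ is the trivial one-part partition, whose stratum is the non-exceptional locus of $\sigma_0$-stable objects. Thus a neighborhood of $\P$ meets the exceptional locus only along $M_P = \P$, so $\P$ is open there, and therefore a connected component.

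I expect the openness half to be the only real content: it is where one must exclude a larger family of strictly semistable objects accumulating onto $\P$, and it rests entirely on the uniqueness of the two stable spherical objects $S, T$ on the wall. A complementary and perhaps cleaner way to package this is to observe that the fiber $\pi^{-1}(p)$ over $p = \pi(\P)$ consists exactly of the $\sigma$-stable objects $S$-equivalent to the polystable object $S \oplus T$; since every such object has Jordan--H\"older factors $\{S, T\}$ each with multiplicity one, this fiber is identified via the extensions \eqref{sphext} with $M_P = \P$. So $\P$ is a full fiber of the contraction, and the clopen argument promotes it to an entire connected component of the exceptional locus.
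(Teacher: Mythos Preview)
Your argument is correct and follows the paper's approach: the paper's one-line justification (``since the decomposition $\class{v}=\class{s}+\class{t}$ cannot be refined'') is exactly the non-refinability you use for closedness, and you have correctly made explicit the complementary openness step (no nontrivial coarsening of a two-part partition) that the paper leaves implicit. One small imprecision: it is not literally true that $S,T$ are the only $\sigma_0$-stable objects in $\mathcal{H}$ (cf.\ Remark~\ref{primrem}), only the only spherical ones; but your conclusion stands because $S$ and $T$ are themselves $\sigma_0$-stable, so $[\class{s},\class{t}]$ is already a Jordan--H\"older partition and admits no proper refinement with nonempty stratum.
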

The proof of Proposition \ref{moduli} begs the same classification question for Lagrangian Grassmannians, and a similar argument shows that they arise as the strata corresponding to partitions of the form $P=[\class{v}=\class{s}+k\class{t}]$, for spherical $\class{s},\class{t}$.
%\begin{proposition}\label{grass}Let $\class{v}\in \HalgX$
%  be primitive with $\class{v}^2>0$ and $\sigma$ a generic stability
%  condition with respect to $\class{v}$.  $M_\sigma(\class{v})$
%  contains an extremal Lagrangian Grassmannian $\Gr(k,\ell)\subset
%  M_\sigma(\class{v})$ if and only if there is an extremal partition $\class{v}=k\class{a}+\class{b}$.
%  %, in which case the class of the line $\P^1\subset\P$ is $\pm\theta^\vee(\class{a})$.
%\end{proposition}
%\begin{proof}The only change of note is that in the proof of Lemma \ref{spherical}, we just get that either $i=1$ or $j=1$.
%\end{proof}
As remarked above, however, it is not the case that an extremal Grassmanian $G$ is contracted to an isolated singularity, because the partition
\begin{equation}\class{v}=\class{s}+\class{t}+\cdots+\class{t}\label{part}\end{equation}
is a common refinement of the partitions $\class{v}=(\class{s}+m\class{t})+(k-m)\class{t}$ for all $0\leq m<k$, and since $(\class{s}+m\class{t})^2\geq 0$, these Jordan-H\"older factors deform.  In fact, $G$ will lie in the closure of each of these strata, since spherical objects have no self extensions and therefore \eqref{part} and $\class{v}=\class{s}+k\class{t}$ have the same associated strata.  There will thus always be a rational curve in $G$ which sweeps out a larger exceptional locus (as in Example \ref{grasseg}). 

\begin{example}\label{grasstwo}Here we revisit Example \ref{grasseg} in the above language.  $X^{[4]}$ is the moduli space $M_\sigma(\class{v})$ for $\class{v}=(1,0,-3)$, parametrizing ideal sheaves $I_Z$ of length 4 subschemes $Z\subset X$, for $\sigma$ in some chamber $\mathcal{C}$ of the stability manifold.  The Grassmannian $\Gr(2,4)\subset X^{[4]}$ arises as the ideal sheaves that are complete intersections of hyperplane sections of $X$:
\[0\into \O_X(-2)\into \O_X(-1)^2\into I_Z\into 0\]
and can therefore be thought of as extensions
\[\O_X(-1)^2\into I_Z\into \O_X(-2)[1]\into \O_X(-1)^2[1]\]
Thus, the corresponding partition is $\class{v}=2\class{s}+\class{t}$, for $\class{s}=(1,-H,3)$ and $\class{t}=(-1,2H,-9)$, and $\class{v}=\class{s}+\class{s}+\class{t}$ is a refinement of it.  As in the above, both of these partitions have the same associated stratum, but the second also refines $\class{v}=\class{s}+\class{a}$, where $\class{a}=\class{s}+\class{t}=(0,H,-6)$.  The stratum $M_P$ of $\class{v}=\class{s}+\class{a}$ parametrizes ideal sheaves of the form
\[0\into \O_X(-1)\into I_Z\into F\into 0\] 
with $\class{v}(F)=\class{a}$---that is, with $Z$ lying entirely on a hyperplane section.  $F$ moves in a $2+\class{a}^2=6$ dimensional family isomorphic to $\overline\Pic_\P^4(\mathcal{H})$, and $M_P$ is the image of the complement of the Lagrangian section $\P$ in Example \ref{grasseg}.  Note that $\P$ is the Lagrangian plane corresponding to the partition $\class{a}=\class{s}+\class{t}$, by Lemma \ref{reverse}. 
\end{example}

The phenomenon in Example \ref{grasstwo} is generally true:  a Lagrangian Grassmannian $\Gr(k,\ell)$ in a moduli space $M_\sigma(\class{v})$ always ``comes from" a Lagrangian Grassmannian in a smaller dimensional moduli space $M_\sigma(\class{w})$ with respect to the same stability condition $\sigma$, and the process terminates at a Lagrangian plane.  A general notion of ``stratified Mukai flops" such as these were first studied by Markman \cite{brillnoether}.

\section{Holomorphic symplectic varieties of K3 type}\label{general}  
We now turn to
the general case.  Much of the Hodge-theoretic structure of Bridgeland moduli spaces on \Kthree surfaces is
echoed by arbitrary holomorphic symplectic manifolds of \Kthree type.  For $M$ a \Kthree type manifold of dimension $2n$, Markman \cite[Corollary 9.5]{torelli}
constructs a monodromy invariant  extension of pure weight 2 Hodge
structures (we will blur the notational distinction between the Hodge
structure and the underlying lattice)
\[0\into H^2(M,\Z)\into \tilde\Lambda(M)\into Q(M)\into 0\]
where $\tilde{\Lambda}(M)$ is a (pure) weight 2 Hodge structure on the Mukai lattice $\tilde{\Lambda}$ polarized by the intersection form, and $Q(M)$ is rank 1 of type $(1,1)$.  In the language introduced in the previous section, this yields a pointed period $(\tilde\Lambda(M),\class{v}(M))$ which determines $M$ up to birational equivalence, again by Verbitsky's Torelli theorem \cite{verbitsky}.  The subgroup of the
oriented isometry group
$O^+(\tilde{\Lambda})$ preserving the embedding $H^2(M,\Z)\into
\tilde{\Lambda}(M)$ is equal to $\Mon^2(M)$, the image of the
restriction map $\Mon(X)\into O(H^2(M,\Z))$, and there is a natural
lift of the monodromy action to $\tilde{\Lambda}(M)$.

In the case of a Bridgeland
moduli space $M=M_\sigma(\class{v})$ of objects on a \Kthree surface
$X$, $\tilde{\Lambda}(M)= \tilde{H}(X,\Z)$ is the pointed period described above, and the embedding $H^2(M,\Z)\into
\tilde{\Lambda}(M)$ is the inverse of the Mukai map
$\theta:\class{v}^\perp\into H^2(M,\Z)$.  In general we will still
denote by $\class{v}(M)$ a primitive generator of
$H^2(M,\Z)^\perp\subset\tilde\Lambda(M)$; we always have
$\class{v}(M)^2=2n-2$.  We will also denote by
$\theta^\vee:\tilde{\Lambda}(M)\into H_2(M,\Z)$ the dual of the
embedding.

From \cite{BHT}, the description of the nef cone of moduli spaces in terms of their pointed periods in (\ref{nefcone}) deforms to all holomorphic symplectic varieties of \Kthree type, and in particular, we have

\begin{theorem}[Theorem 1 of \cite{BHT}]\label{nefconedescript}  Let $(M,h)$ be a polarized holomorphic symplectic variety of \Kthree type.  The Mori cone of $M$ is generated by the positive cone and classes of the form 
\[\left\{\theta^\vee(\class{a})\mid\class{a}\in\tilde\Lambda(M)_{\mathrm{alg}}\textrm{ with }\class{a}^2\geq -2, |(\class{a},\class{v})|\leq \frac{\class{v}^2}{2}, h.\theta^\vee(\class{a})>0\right\}\]

\end{theorem}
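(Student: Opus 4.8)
The plan is to prove the statement first for Bridgeland moduli spaces, where it dualizes the Bayer--Macr\`i description recalled in (\ref{8})--(\ref{nefcone}), and then to propagate it to an arbitrary polarized \Kthree type variety by a deformation argument resting on Markman's monodromy-invariant pointed period and Verbitsky's Torelli theorem \cite{verbitsky,torelli}. There are two inclusions to establish: that each $\theta^\vee(\class{a})$ meeting the numerical conditions lies in the Mori cone (effectivity), and conversely that every extremal ray is either interior to the positive cone or of this form.

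First I would treat $M=M_\sigma(\class{v})$ for $\class{v}$ primitive with $\class{v}^2>0$ and $\sigma$ generic. By (\ref{8}) the map $\ell$ carries the nef chamber of $\sigma$ onto the nef cone, and by (\ref{nefcone}) every face of that cone has the form $\theta(\mathcal{H}^\perp)$ for a hyperbolic $\mathcal{H}\subset\HalgX$ containing a class $\class{a}$ with $\class{a}^2\geq -2$ and $|(\class{a},\class{v})|\leq\frac{\class{v}^2}{2}$. Since $\theta$ is an isometry with dual $\theta^\vee$, dualizing identifies the extremal ray of the Mori cone opposite to each such face with $\theta^\vee(\class{a})$, and the condition $h.\theta^\vee(\class{a})>0$ merely pins down which of $\pm\class{a}$ generates the effective ray against the ample class. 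The faces lying on the boundary of the positive cone are isotropic (Lagrangian-fibration walls), and their dual rays, together with the whole positive cone, are automatically effective; effectivity of the remaining $\theta^\vee(\class{a})$ is witnessed geometrically by extension families of the type constructed in Lemma \ref{reverse} and its Grassmannian and wall-crossing variants. This settles the Bridgeland case.

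Next I would deform. For a general polarized $(M,h)$, Markman's extension $0\into H^2(M,\Z)\into\tilde{\Lambda}(M)\into Q(M)\into 0$ supplies a pointed period $(\tilde{\Lambda}(M),\class{v}(M))$, and the entire right-hand side of the asserted description is a function of this period and of the chamber of $h$ alone. The idea is to join $(M,h)$ to a polarized Bridgeland moduli space sharing the same polarized period by a family $\mathcal{M}\into B$ over a connected base along which $h$ stays ample and confined to one chamber of the movable-cone decomposition. Verbitsky's Torelli theorem, together with Markman's identification of $\Mon^2(M)$ with the isometries of $\tilde{\Lambda}(M)$ preserving the embedding of $H^2(M,\Z)$, guarantees such a path exists, since the periods realized by Bridgeland moduli spaces are dense and the monodromy orbit of the triple $(\tilde{\Lambda}(M),\class{v}(M),h)$ is precisely what the statement is invariant under. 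Along such a family the nef cone---and hence, dually, the Mori cone---is locally constant, so the Bridgeland computation transports to $M$.

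The crux is this deformation step, and specifically the two ways the Mori cone can fail to be deformation-invariant. First, it is not a birational invariant, so it is essential to carry $h$ along with $M$ inside a single chamber of the movable cone; crossing a wall would flop the variety and alter the cone. Second and more seriously, one must ensure that the extremal rational curves persist, i.e. that a class $\theta^\vee(\class{a})$ effective at one point of $B$ stays effective everywhere. This is the delicate point: one uses that $\theta^\vee(\class{a})$ remains of Hodge type $(n-1,n-1)$ throughout the polarized family, so it is the class of a potential contracted curve at every member, and that the associated extremal contraction itself deforms, so the exceptional curve cannot disappear. Establishing this persistence---equivalently, that the monodromy-invariant wall-and-chamber decomposition of the positive cone is genuinely the one cut out by the classes $\class{a}$ at \emph{every} member of the family---is the main obstacle, and is the technical heart of the result.
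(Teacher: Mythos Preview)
The paper does not prove this statement at all: Theorem~\ref{nefconedescript} is quoted verbatim as ``Theorem~1 of \cite{BHT}'' and used as a black-box input to the arguments of Section~\ref{general} (most notably in the proof of Theorem~\ref{converse}). There is therefore no ``paper's own proof'' to compare your proposal against.

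That said, your sketch is broadly the strategy of \cite{BHT} itself---establish the description for Bridgeland moduli spaces via \cite{BM}, then propagate by deformation using Markman's pointed period and density of moduli-space periods---and the present paper does run exactly this kind of deformation argument, but only for the special case of Lagrangian planes (see the proofs of Theorems~\ref{better} and~\ref{converse}, which invoke Proposition~3 and Corollary~6 of \cite{BHT}). So while your outline is reasonable and in the right spirit, it is a reconstruction of a result the paper merely cites, not a comparison target within the paper.

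One caution on your sketch: the sentence ``Along such a family the nef cone---and hence, dually, the Mori cone---is locally constant'' is not literally true (the Picard rank jumps at special points), and you correctly flag this as the crux two sentences later. In \cite{BHT} the actual content is precisely the persistence argument you identify at the end (their Proposition~3 and the surrounding discussion), so if you were writing this up you would want to replace the ``locally constant'' claim with that finer statement rather than contradict yourself.
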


Before proving the general case of Theorem \ref{main}, recall that a parallel transport operator is an isometry $\phi:H^2(M,\Z)\into H^2(M',\Z)$ that arises from parallel transport of the local system $R^2f_*\Z$ for some smooth proper family $f:\mathcal{M}\into B$ along a path with endpoint fibers $M$ and $M'$.    Recall also that for an embedding $i:Y\hookrightarrow M$ of a Lagrangian submanifold into a holomorphic symplectic
  manifold, the
  deformations
  of the pair $(M,i)$ are those of $M$ that preserve the sub-Hodge structure $\ker i^*\subset H^*(M,\Z)$,
  and they are unobstructed (see \cite{hodge2} and \cite{hodge1}).  In our case, for $Y=\P$ and $R\in H_2(M_\sigma(\class{v}),\Z)$ the
  class of the line $R$, as long as $R$ remains algebraic in a family the
  plane will deform as well.  

%We have the
%
%\begin{lemma}\label{extremal}
%Let $M$ be a holomorphic symplectic variety of K3 type and dimension $2n$ with a Lagrangian plane $\P\subset M$ and let $R\in H_2(M,\Z)$ be the class of the line.  Then there is a smooth proper family $f:\mathcal{M}\into B$ with a section $R$ of $R^{2n-2}f_*\Z$ which remains algebraic and two bassoonist $b_0,b_1\in B$ such that 
%\end{lemma}
%\begin{proof}
%\end{proof}
%and $R\in H_2(M,\Z)$ an extremal curve class of the form $R=\theta^\vee(\class{a})$ with $\class{a}^2\geq -2$ and $|(\class{a},\class{v})|\leq\frac{\class{v}^2}{2}$.  Then there is a birational model $M'$ of $M$ and a parallel transport operator $\phi:H^2(M,\Z)\into H^2(M',\Z)$ such that $\phi(R)$ is extremal in the Mori cone.
%
%\begin{proof}
%$\phi(R)$ of course has the same numerical properties, so by Theorem \ref{nefconedescript} either $\phi(R)$ or $-\phi(R)$ is effective.  The claim then follows by Corollary 6 of \cite{BHT}, whose proof is essentially the fact that the movable cone is tiled by translates of the nef cones of the birational models of $M$ under parallel transport operators.
%\end{proof}
%
%  In particular, a generic such deformation cannot be projective; otherwise, as $R$ is the only curve class a multiple must be the complete intersection of ample divisors contradicting Lemma \ref{deform}.  Thus, $(R,R)<0$. 

We now prove the first main theorem:
  
    \begin{theorem}\label{better}Let $(M,h)$ be a holomorphic symplectic variety of \Kthree type and dimension $2n$ with a Lagrangian plane $\P\subset M$ and let $R\in H_2(M,\Z)$ be the class of the line.  Then $\tilde\Lambda(M)$ admits a $\P$ type sublattice $\mathcal{H}$ and $R=\theta^\vee(\class{s})$ for a spherical class $\class{s}\in \mathcal{H}$ with $|(\class{s},\class{v})|=\frac{\class{v}^2}{2}$.
  \end{theorem}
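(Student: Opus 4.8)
The plan is to reduce the general case to the already-established Bridgeland case (Proposition \ref{moduli}) by a deformation argument, using Markman's extended Hodge structure $\tilde\Lambda(M)$ as the invariant that survives deformation. First I would note that by the unobstructedness of deformations of the pair $(M,\P)$ recalled just above the statement, as long as the class $R\in H_2(M,\Z)$ (equivalently its image in $H^2$ under the Beauville--Bogomolov embedding) remains algebraic, the Lagrangian plane $\P$ deforms along with $M$. Since $\P$ contracts to a point, $R$ spans an extremal ray of the Mori cone, so by Theorem \ref{nefconedescript} there is a class $\class{a}\in\tilde\Lambda(M)_\mathrm{alg}$ with $\class{a}^2\geq -2$, $|(\class{a},\class{v})|\leq\frac{\class{v}^2}{2}$, and $R=\theta^\vee(\class{a})$; the point will be to show this $\class{a}$ is in fact spherical and saturates the inequality.

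Next I would deform $(M,\P)$ to a Bridgeland moduli space $M_\sigma(\class{v})$ on some \Kthree surface $X$. Concretely, the pointed period $(\tilde\Lambda(M),\class{v}(M))$ together with the sublattice $\mathcal{H}\subset\tilde\Lambda(M)_\mathrm{alg}$ generated by $\class{v}$ and $\class{a}$ determines $M$ up to birational equivalence by Verbitsky's Torelli theorem. I would choose a deformation keeping $\mathcal{H}$ of Hodge type $(1,1)$ (so that $\P$ persists) while degenerating the transcendental part of the Hodge structure so that the generic member is realized as $M_\sigma(\class{v})$ for a \Kthree surface $X$ with $\tilde H(X,\Z)\cong\tilde\Lambda(M)$ and the distinguished class matching $\class{v}$. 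This is where Markman's identification of $\Mon^2(M)$ with the stabilizer of the embedding $H^2(M,\Z)\hookrightarrow\tilde\Lambda(M)$ is essential: it guarantees that the abstract lattice datum $(\tilde\Lambda(M),\class{v},\mathcal{H})$ can be realized by an actual \Kthree-type variety of Bridgeland type, and that the parallel-transport operator carries $R$ to the class of the line in the resulting plane.

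On the Bridgeland side, Proposition \ref{moduli} applies: an extremal Lagrangian plane in $M_\sigma(\class{v})$ forces $\mathcal{H}$ to be an extremal $\P$ type sublattice, and the class of the line is $\pm\theta^\vee(\class{s})$ for a spherical $\class{s}$ with $(\class{s},\class{v})=\frac{\class{v}^2}{2}$. Since the $\P$ type condition and the value $(\class{s},\class{v})=\frac{\class{v}^2}{2}$ are purely lattice-theoretic statements about $(\tilde\Lambda(M),\class{v},\mathcal{H})$, and since parallel transport is an isometry identifying $\theta^\vee$ on both sides, these conclusions transport back to $M$. Thus $\mathcal{H}$ is a $\P$ type sublattice of $\tilde\Lambda(M)$ and $R=\theta^\vee(\class{s})$ with $|(\class{s},\class{v})|=\frac{\class{v}^2}{2}$, as required.

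The main obstacle I anticipate is the deformation step: producing a parallel-transport operator connecting the given $(M,\P)$ to a Bridgeland moduli space while keeping $R$ algebraic throughout. One must ensure that the period point with the prescribed algebraic sublattice $\mathcal{H}$ actually lies in (or can be joined within the algebraic locus to) the image of the period map for Bridgeland moduli spaces, and that the wall-crossing geometry controlling the contraction deforms correctly so that the plane does not acquire extra components or cease to be extremal. Handling the non-primitivity of $R$ (via the successive spherical-reflection reduction in the last paragraph of the proof of Lemma \ref{reverse}) within this deformation framework will require care, but the Hodge-theoretic bookkeeping is what carries the real weight.
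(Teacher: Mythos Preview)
Your overall strategy---deform to a Bridgeland moduli space, apply Proposition~\ref{moduli}, and transport the lattice conclusion back via parallel transport---is exactly the paper's approach. However, there is a genuine gap in your first paragraph: the inference ``since $\P$ contracts to a point, $R$ spans an extremal ray of the Mori cone'' is not valid. As the paper notes in Section~\ref{summary}, a Lagrangian plane always contracts in the category of algebraic spaces, but the target need not be projective; extremity in the Mori cone is precisely the condition that there is a \emph{projective} contraction, and Theorem~\ref{better} is stated for an arbitrary Lagrangian plane, not an extremal one. Consequently you cannot invoke Theorem~\ref{nefconedescript} to produce the class $\class{a}$ at this stage, nor can you assume extremity when you arrive at the moduli space and want to apply Proposition~\ref{moduli}.

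The paper sidesteps this entirely. It uses only that $(R,R)<0$---a consequence of the rigidity in Lemma~\ref{deform}---to invoke the argument of \cite[Proposition~3]{BHT}, producing a smooth proper family over an irreducible analytic base along which $R$ stays algebraic. Unobstructedness then carries $\P$ to the general fiber. Because periods of Bridgeland moduli spaces are dense in the base of the Kuranishi family of the pair $(M,\P)$, one can specialize to a moduli space $M_\sigma(\class{v})$ where $\P$ does not degenerate \emph{and} is arranged to be extremal there. Only at that endpoint does Proposition~\ref{moduli} apply, yielding the $\P$ type sublattice and the identification $R=\theta^\vee(\class{s})$; these lattice-theoretic conclusions then transport back. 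In particular, there is no need to pre-identify the sublattice $\mathcal{H}$ before deforming, and no need to assume or prove extremity in $M$ itself.
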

  \begin{proof}We know that $R^2<0$, and so by the argument of Proposition 3 of \cite{BHT}, there is a smooth proper family $f:\mathcal{M}\into B$ over an irreducible analytic base specializing to $M$ over some point $0\in B$ such that there is an algebraic section $\rho$ of $R^{2n-2}f_*\Z$ specializing to $R$ over $0$ (the argument of Proposition 3 only uses that $R^2<0$ and the line deforms sideways).  By the above discussion, the Lagrangian plane $\P$ also deforms to the general fiber.  As the periods of moduli spaces are dense in the base of the Kuranishi family of the pair $(M,\P)$, we can find a specialization to a moduli space for which the plane $\P$ does not degenerate and such that $\P$ is extremal.  Transporting the $\P$ type lattice guaranteed by \ref{moduli} then yields the claim.

  \end{proof}

  The existence of a $\P$ type lattice does not only depend on $H^2(M,\Z)$, but we always have a simple necessary criterion for a curve class to be the class of a line in a Lagrangian plane:

\begin{corollary}\label{prim}  Let $(M,h)$ be as above, and let $R$ be the class of a line in a Lagrangian plane.  Then
$(R,R)=-\frac{n+3}{2}$ and $2R\in H^2(M,\Z)$.
\end{corollary}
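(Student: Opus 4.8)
The plan is to obtain Corollary \ref{prim} as a purely formal consequence of Theorem \ref{better}. That theorem supplies a $\P$ type sublattice $\mathcal{H}\subset\tilde\Lambda(M)$ together with a spherical class $\class{s}\in\mathcal{H}$ satisfying $\class{s}^2=-2$ and $|(\class{s},\class{v})|=\frac{\class{v}^2}{2}$, and it identifies the line class as $R=\theta^\vee(\class{s})$. Recall also from Section \ref{general} that $\class{v}^2=2n-2$. Thus the entire content of the corollary is the translation of these three numerical facts through the dual Mukai map $\theta^\vee$, and no further geometry is required.

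First I would pin down how $\theta^\vee$ interacts with the Beauville--Bogomolov form. The embedding $\iota\colon H^2(M,\Z)\hookrightarrow\tilde\Lambda(M)$ identifies $H^2(M,\Z)$ isometrically with $\class{v}^\perp$, the map $\theta$ is its inverse, and $\theta^\vee\colon\tilde\Lambda(M)\into H_2(M,\Z)$ is the dual with respect to the Mukai pairing and the Kronecker pairing between $H^2$ and $H_2$. Let $\pi\colon\tilde\Lambda(M)\otimes\Q\to\class{v}^\perp\otimes\Q$ denote orthogonal projection, so $\pi(\class{w})=\class{w}-\frac{(\class{w},\class{v})}{\class{v}^2}\class{v}$. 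The key identity I would establish is
\[(\theta^\vee(\class{w}),\theta^\vee(\class{w}'))=(\pi(\class{w}),\pi(\class{w}'))\]
for all $\class{w},\class{w}'\in\tilde\Lambda(M)$, where the left-hand pairing is the rational extension of the Beauville--Bogomolov form to $H_2(M,\Z)$. This holds because $\pi(\class{w})$ is characterized inside $\class{v}^\perp\otimes\Q$ by pairing with every $\class{a}\in\class{v}^\perp$ exactly as $\class{w}$ does, so under the isometry $\theta$ it corresponds to $\theta^\vee(\class{w})$; the isometry property of $\theta$ then transports the Mukai pairing to the Beauville--Bogomolov pairing.

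With this identity the first assertion is immediate:
\[(R,R)=(\pi(\class{s}),\pi(\class{s}))=\class{s}^2-\frac{(\class{s},\class{v})^2}{\class{v}^2}=-2-\frac{\class{v}^2}{4}=-2-\frac{n-1}{2}=-\frac{n+3}{2},\]
using $(\class{s},\class{v})^2=(\class{v}^2/2)^2$ and $\class{v}^2=2n-2$. For integrality, write $\pi(\class{s})=\class{s}\mp\tfrac12\class{v}$ according to the sign of $(\class{s},\class{v})$; then $2\pi(\class{s})=2\class{s}\mp\class{v}$ is an integral class, and $(2\class{s}\mp\class{v},\class{v})=2(\class{s},\class{v})\mp\class{v}^2=0$, so it lies in $\class{v}^\perp$. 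Since $\theta^\vee(\class{v})=0$ and $\theta^\vee$ agrees with the integral isometry $\theta$ on $\class{v}^\perp$, I conclude $2R=\theta^\vee(2\class{s}\mp\class{v})=\theta(2\class{s}\mp\class{v})\in H^2(M,\Z)$.

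The computation is elementary; the only point requiring care is the compatibility identity between $\theta^\vee$ and the Beauville--Bogomolov form, which rests on $\theta$ being an isometry (Section \ref{nefcone}) and on the unimodularity of the Mukai lattice $\tilde\Lambda$, so that orthogonal projection to $\class{v}^\perp\otimes\Q$ correctly computes the dual of the restricted embedding. Once that bookkeeping is in place, both statements fall out of the single quadratic evaluation above, and I do not anticipate any genuine obstacle beyond keeping the two pairings (Mukai versus Beauville--Bogomolov, integral versus rational) carefully distinguished.
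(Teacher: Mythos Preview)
Your proposal is correct and follows essentially the same route as the paper: reduce to Theorem \ref{better}, identify $\theta^\vee$ with orthogonal projection onto $\class{v}^\perp$, compute $(R,R)$ as the square of $\class{s}-\tfrac{(\class{s},\class{v})}{\class{v}^2}\class{v}$, and observe that $2\class{s}\mp\class{v}$ is an integral class in $\class{v}^\perp$. The paper packages this as a separate Lemma (stated for $(\class{a},\class{v})=\class{v}^2/2$ without the absolute value) and additionally notes that $R$ has order exactly $2$ in the discriminant group, but for the corollary as stated your argument is complete.
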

  \begin{proof}  The statement follows from the following observation:
  \begin{lemma}\label{numbers}
%\begin{enumerate} 
%\renewcommand{\theenumi}{\roman{enumi}}
If $\tilde\Lambda(M)$ admits a spherical $\class{a}\in\tilde\Lambda(M)$ such that $(\class{a},\class{v})=\frac{\class{v}^2}{2}$, then $R=\theta^\vee(\class{a})$ has $(R,R)=-\frac{n+3}{2}$ and order 2 in the discriminant group of $H^2(M,\Z)$.
%\item If $R\in H_2(M,\Z)$ is a class with $(R,R)=-\frac{n+3}{2}$ and order 2 in the discriminant group of $H^2(M,\Z)$, then $R=\theta^\vee(\class{a})$ for some spherical $\class{a}\in \tilde\Lambda(M)$ with $(\class{a},\class{v})=\frac{\class{v}^2}{2}$.
%\item If in (2) $R$ is primitive, then the saturation $\mathcal{H}$ of $\langle \class{a},\class{v}\rangle$ is a $\P$ type sublattice.  
%\end{enumerate}
\end{lemma}
\begin{proof}  
Since $\theta^\vee$ is the
composition of the orthogonal projection onto $\class{v}^\perp$ and
the inclusion $H^2(M,\Z)\into H_2(M,\Z)$ given by the quadratic form, we have
\[(R,R)=\left(\class{a}-\frac{\class{v}}{2}\right)^2=\class{a}^2-(\class{a},\class{v})+\frac{\class{v}^2}{4}=-\frac{n+3}{2}\]
We have $2\class{a}-\class{v}\in \class{v}^\perp$, so $R$ is 2-torsion in the
discriminant group $D$ of $H^2(M,\Z)$, but clearly $R\neq 0$ in $D$, so
$R$ has order 2.  

%For (2), given $R$ with $2R\in H^2(M,\Z)$, take $\class{w}\in
%\class{v}^\perp$ with $\theta^\vee(\class{w})=2R$; then
%$\class{w}+\class{v}$ is 2-divisible in $\tilde{\Lambda}$.  Indeed, the generator of $Q$ lifts to some $\class{e}\in\tilde{\Lambda}$ with
%$\Z\class{e}\oplus H^2(M,\Z)\cong \tilde{\Lambda}$.  Since $\class{v}$
%is primitive, $(\class{e},\class{v})=1$, and
%\[(\class{w}+\class{v},\tilde{\Lambda})=(\class{w},H^2(M,\Z))+(1+(\class{w},\class{e}))\Z\]
%We cannot have $2|(\class{w},\class{e})$, or else $\class{w}$ would be
%2-divisible in $\tilde{\Lambda}$ whereas $2R$ is primitive (in
%$H^2(M,\Z))$, and the first part of the claim follows taking $\class{a}=\frac{\class{w}+\class{v}}{2}$.    
%
%
%For (3), if $\class{s}\in\mathcal{H}$ is a spherical class, then $R'=\theta^\vee(\class{s})=kR$, so
%\[(R',R')=-2-\frac{(\class{s},\class{v})^2}{\class{v}^2}=-k^2\frac{n+3}{2}\leq -\frac{n+3}{2}\]
%and thus $|(\class{s},\class{v})|\geq \frac{\class{v}^2}{2}$.
\end{proof}
\end{proof}
  
  Running the argument of Theorem \ref{better} backwards yields a partial converse:
  
  \begin{theorem} \label{converse} Let $(M,h)$ be as above, and suppose $R\in H_2(M,\Z)$ is a primitive generator of an extremal ray of the Mori cone.  Then $R$ is the class of a line in a Lagrangian plane if and only if $(R,R)=-\frac{n+3}{2}$ and $2R\in H^2(M,\Z)$.
  \end{theorem}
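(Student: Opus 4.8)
The plan is to prove both directions separately, with the forward direction already being the content of Theorem \ref{better} and Corollary \ref{prim}. Indeed, if $R$ is the class of a line in a Lagrangian plane, then Corollary \ref{prim} immediately gives $(R,R)=-\frac{n+3}{2}$ and $2R\in H^2(M,\Z)$, with no use of extremality or primitivity. So the real content of this theorem is the reverse direction: starting from a primitive extremal class $R$ with the stated numerical properties, I must produce a Lagrangian plane whose line is $R$.

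For the reverse direction, the strategy is to run the deformation argument of Theorem \ref{better} backwards, as the statement of the theorem advertises. First I would use the numerical hypotheses to reconstruct a $\P$ type sublattice: the class $\class{a}$ with $\theta^\vee(\class{a})=R$ should be recoverable from $R$ via $\class{a}=R+\frac{\class{v}}{2}$ (the section of $\theta^\vee$ landing in $\class{v}^\perp$-complement), and the computation in Lemma \ref{numbers} run in reverse should show that $(R,R)=-\frac{n+3}{2}$ forces $\class{a}^2=-2$, i.e. $\class{a}$ is spherical, while $2R\in H^2(M,\Z)$ guarantees $\class{a}\in\tilde\Lambda(M)_{\mathrm{alg}}$ is genuinely integral. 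The sublattice $\mathcal{H}=\langle\class{v},\class{a}\rangle$ is then automatically of $\P$ type provided there is no spherical class of smaller pairing with $\class{v}$; this minimality should follow from the assumption that $R$ is \emph{extremal} and \emph{primitive}, since a competing spherical class would, via Theorem \ref{nefconedescript}, produce a curve class contradicting either extremality or the minimality of the square established in Theorem \ref{extreme}.

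Having produced an abstract $\P$ type sublattice $\mathcal{H}\subset\tilde\Lambda(M)$, I would then deform $M$ to a Bridgeland moduli space $M_\sigma(\class{v})$ as in the proof of Theorem \ref{better}: since $R^2<0$ and $R$ is extremal (hence deforms sideways as an algebraic class), the family $f:\mathcal{M}\into B$ constructed via Proposition 3 of \cite{BHT} specializes $R$ to a moduli space, and because periods of moduli spaces are dense in the Kuranishi base, I can arrange the specialization so that $\mathcal{H}$ becomes extremal with respect to some generic $\sigma$. At that point Lemma \ref{reverse} (or Proposition \ref{moduli}) produces an extremal Lagrangian plane in $M_\sigma(\class{v})$ whose line is $\pm\theta^\vee(\class{s})$, and deforming back along the family carries this plane to $M$, with its line specializing to $R$.

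The main obstacle I anticipate is controlling the deformation so that the Lagrangian plane survives and the class $R$ stays extremal throughout. Extremality is not obviously a deformation-invariant condition—a priori the ray generated by $R$ could cease to be a boundary ray of the Mori cone as one moves in the family—so the delicate point is to use the density of moduli-space periods together with the wall-and-chamber structure of Theorem \ref{nefconedescript} to find a specialization in which $R$ remains extremal \emph{and} $\mathcal{H}$ realizes a wall of the nef cone. Equivalently, one must verify that the wall $\theta^\vee(\mathcal{H})^\perp$ meets the movable cone of the moduli space, which is precisely the extremality condition in Definition \ref{ptype}; I expect this to be where the primitivity hypothesis does essential work, ruling out the pathology illustrated by the class $R_2$ in Example \ref{egg} where the numerical criteria hold but the relevant wall fails to meet the movable cone.
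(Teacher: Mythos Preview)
Your proposal is correct and follows essentially the same route as the paper's proof: the forward direction is Corollary \ref{prim}, and for the converse you reconstruct a $\P$ type sublattice from the numerical hypotheses (the paper does this via Theorem \ref{nefconedescript} rather than writing $\class{a}=R+\tfrac{\class{v}}{2}$ explicitly, but this amounts to the same computation), deform to a Bridgeland moduli space keeping $R$ algebraic and extremal (the paper invokes Proposition 3 and Corollary 6 of \cite{BHT} for this), apply Proposition \ref{moduli} there, and deform the resulting plane back. You have also correctly isolated the genuine subtlety---that primitivity and extremality of $R$ are what prevent the plane from degenerating upon specializing to $M$---and your reference to the class $R_2$ of Example \ref{egg} is exactly the cautionary example the paper cites for why these hypotheses cannot be dropped.
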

\begin{proof}  The ``only if" part follows from the previous theorem, so we only need to prove the sufficiency of the numerical conditions in this setting.  As $R$ is extremal, we know by Theorem \ref{nefconedescript} that it is a multiple of a class of the form $\theta^\vee(\class{a})$ for $\class{a}\in \mathcal{H}\subset\tilde\Lambda(M)$ as in the Theorem, and it is not hard to see that in fact $\mathcal{H}$ must be $\P$ type and further $R=\theta^\vee(\class{s})$ for a spherical class $\class{s}$ with $|(\class{s},\class{v})|=\frac{\class{v}^2}{2}$.  Again by Proposition 3 and Corollary 6 of \cite{BHT}, there is a smooth proper family along which $R$ remains algebraic specializing to a moduli space $M'$ for which the image $R'\in H_2(M',\Z)$ of $R$ is extremal, and therefore by Theorem \ref{moduli} $R'$ is the class of a line in a Lagrangian plane $\P$ which then deforms to the general fiber of the family, as above.  The primitivity and extremity assumptions on $R$ ensure that $\P$ does not degenerate in $M$. 
\end{proof}

A full converse to Corollary \ref{prim} is not expected without some indecomposability constraint on the curve class $R$---indeed, such a hypothesis is also needed in the case of smooth rational curves on \Kthree surfaces---but the exact condition is at the moment unclear.  If we drop the extremity and primitivity condition in Theorem \ref{converse} and only insist that $R$ comes from a $\P$ type lattice, then the argument carries through except at the last step where we must show that the plane $\P$ does not degenerate in $M$.  For example, the class $R_2$ in Example \ref{egg}(ii) is such a class, and there is even a family keeping $R_2$ algebraic whose associated parallel transport operator sends $R_2$ to $R_1$, but the Lagrangian plane on the generic fiber of this family could easily degenerate.

With Corollary \ref{prim} in mind, Corollary \ref{bettermonodromy} follows by lattice theory.  Let $L$ be the lattice $H^2(M,\Z)$, and
$D(L)=L^\vee/L$ its discriminant group.  Denote by $O(L)$ the isometry
group of $L$ and by $\tilde{O}(L)$ the group of isometries acting trivially
on $D(L)$.  By a result of Eichler \cite[\S
10]{eichler} (see also \cite[Lemma 3.5]{GHS}), the orbit of a
primitive class $\class{a}\in
L$ under the group $\tilde{O}(L)$ is determined by its
square $(\class{a},\class{a})$ and the class of its dual
$\class{a}^\vee=\frac{1}{\divo(\class{a})}(\class{a},\cdot)\in D(L)$
in the discriminant group.  Recall that $\divo(\class{a})$ is defined
by $(\class{a},L)=\divo(\class{a})\Z$.  By \cite[Lemma 9.2]{torelli},
$\tilde{O}(H^2(M,\Z))$ is an index 2 subgroup of $\Mon^2(M)$, and therefore we deduce:
\begin{corollary}\label{mono}  There is a single monodromy orbit containing all
  primitive classes arising as lines in 
 Lagrangian planes embedded in holomorphic symplectic varieties of
 \Kthree type.
\end{corollary}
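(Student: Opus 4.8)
The plan is to combine the numerical constraints of Corollary \ref{prim} with Eichler's criterion, reducing everything to the action of $\tilde O(L)$ on a single primitive vector of $L=H^2(M,\Z)$. Since all the varieties in question are of a fixed \Kthree type, their second cohomology lattices are all isometric to one fixed lattice $L\cong U^{\oplus 3}\oplus E_8(-1)^{\oplus 2}\oplus\langle 2-2n\rangle$, and by \cite[Lemma 9.2]{torelli} the corresponding monodromy groups are identified with a single subgroup $\Mon^2\subseteq O(L)$ that contains $\tilde O(L)$ as an index-$2$ subgroup. The monodromy action extends canonically (and $\Q$-linearly) to $L^\vee=H_2(M,\Z)$, so it suffices to prove that all primitive $R\in L^\vee$ occurring as lines in Lagrangian planes lie in a single $\tilde O(L)$-orbit.

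First I would translate the content of Corollary \ref{prim} into lattice invariants: every such $R$ satisfies $(R,R)=-\frac{n+3}{2}$ and has order $2$ in the discriminant group $D(L)=L^\vee/L$. The crucial reduction is to pass from $R\in L^\vee$ to the vector $w=2R\in L$, and to check that primitivity of $R$ forces $w$ to be primitive in $L$ with $\divo(w)=2$. Writing $w=c\,w_0$ with $w_0\in L$ primitive and $c\geq 1$, the order-$2$ hypothesis excludes $c$ even (which would give $\bar R=0$); and for $c$ odd one has $\tfrac12 w_0=R-\tfrac{c-1}{2}w_0\in L^\vee$, so that $R=c\cdot(\tfrac12 w_0)$, whence primitivity of $R$ gives $c=1$. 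The same identity shows $2\mid\divo(w)$, and $R=\tfrac{\divo(w)}{2}\cdot\tfrac{1}{\divo(w)}w$ being primitive forces $\divo(w)=2$; consequently the discriminant invariant $w_\ast=\overline{\tfrac12 w}$ coincides with the image $\bar R$ of $R$ in $D(L)$.

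Now I would invoke Eichler's criterion in the form quoted above (\cite[\S 10]{eichler}, \cite[Lemma 3.5]{GHS}), which applies since $L$ contains $U^{\oplus 3}$: the $\tilde O(L)$-orbit of the primitive vector $w$ is determined by $(w,w)=4(R,R)=-2(n+3)$ and by $w_\ast=\bar R\in D(L)$. The square is the same for every $R$, so only the discriminant class must be pinned down. But for \Kthree type we have $D(L)\cong\Z/(2n-2)$, which for $n\geq 2$ possesses a \emph{unique} element of order $2$; hence $\bar R$ is forced to be this single element, independent of $R$. Therefore all the vectors $w=2R$ lie in one $\tilde O(L)$-orbit, and applying $g\in\tilde O(L)$ (extended to $L^\vee$) sends $R=\tfrac12 w$ accordingly, so the classes $R$ form a single $\tilde O(L)$-orbit and a fortiori a single $\Mon^2$-orbit. (For $n=1$ the claim is the classical statement about $(-2)$-curves, where $R\in L$ already.)

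The step I expect to be the main obstacle is exactly this reduction from $R$ to $w=2R$: Eichler's criterion is stated for primitive vectors of $L$ itself, whereas $R$ lives in the dual lattice $L^\vee$, so one must argue carefully that primitivity of $R$ is equivalent to $2R$ being primitive of divisor precisely $2$, and that the associated discriminant class is intrinsic. Everything else is routine for the known \Kthree type lattice—the value of the square, the structure of $D(L)$, and the uniqueness of its order-$2$ element—with the only remaining bookkeeping being the identification of the oriented monodromy group $\Mon^2$ with $\tilde O(L)$ via Markman's index-$2$ result.
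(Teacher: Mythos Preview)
Your proposal is correct and follows essentially the same route as the paper: invoke Corollary~\ref{prim} for the numerical constraints, apply Eichler's criterion to the primitive vector $2R\in L$, and use Markman's identification $\tilde O(L)\subset\Mon^2(M)$. The paper leaves the passage from $R\in L^\vee$ to a primitive vector of $L$ (and the uniqueness of the order-$2$ element in $D(L)\cong\Z/(2n-2)$) implicit, whereas you spell it out carefully---but the argument is the same.
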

\begin{remark}  In fact, the same proof shows that the number of monodromy orbits containing the classes of lines in Lagrangian planes is at most equal to the number of square divisors of $(n+3)(n-1)$.
\end{remark}

\begin{remark}\label{primrem}It is not in general true that the class of a line in an extremal Lagrangian plane is primitive.  Indeed, if $\class{v}$ is minimal in a $\P$ type sublattice $\mathcal{H}$, so that $\class{v}=\class{s}+\class{t}$ in the notation of Section 4, this will be the case if and only if the parallelogram with vertices $0,\class{s},\class{t},\class{v}$ contains no interior lattice point---\emph{i.e.} if $\class{s}$ and $\class{t}$ generate $\mathcal{H}$.  Since the effective cone is generated by $\class{s},\class{t}$, any other contracted stratum $M_P$ corresponding to a partition $P=[\class{v}=\sum \class{a}_i]$ must have each $\class{a}_i$ in the interior of the parellelogram with vertices $0,\class{s},\class{t},\class{v}$, so this is in turn equivalent to $\P$ being the only exceptional locus.
\end{remark}

In dimensions $\leq 8$, the method of proving the sufficiency of the numerical criteria in Corollary \ref{prim} in \cite{moving,HHT,laghyp} also provides universal expressions for the class $[\P]\in H^{2n}(M,\Z)$ of a Lagrangian plane in terms of Hodge classes and the dual to the class of the line $\rho=2R\in H^2(M,\Z)$:
\begin{align*}
&[\P^2]=\frac{1}{24}\left(3\rho^2+\chern{2}(M)\right)\\
&[\P^3]=\frac{1}{48}\left(\rho^3+\rho\chern{2}(M)\right)\\
& [\P^4]=\frac{1}{337920}\left(880\rho^4+1760\rho^2\chern{2}(M)-3520\theta^2+4928\theta
 \chern{2}(M)-1408\chern{2}(M)^2\right)\\
\end{align*}
Here $\theta\in \Sym^2 H^2(M,\Z)^*\subset H^4(M,\Q)$ is the class of the Beauville--Bogomolov form.  Given the monodromy invariance in Corollary \ref{mono}, such universal expressions must exist.

\begin{question}  What are the universal polynomials for the class of a Lagrangian plane (with primitive line class) in a holomorphic symplectic variety of \Kthree type in terms of the dual to the line and Hodge classes? 
\end{question}

As is clear from the $n=4$ case, the class of a Lagrangian plane cannot always be expressed purely in terms of chern classes and $\rho$.

\bibliography{biblio}
\bibliographystyle{alpha}
\end{document}